\renewcommand{\thefootnote}{\fnsymbol{footnote}}
\newtheorem{theo}{Theorem}[section]
\newtheorem{prop}{Proposition}[section]
\newtheorem{coro}{Corollary}[section]
\newtheorem{lemm}{Lemma}[section]
\newtheorem{fact}{Fact}[section]
\theoremstyle{definition}
\theoremstyle{remark}
\newtheorem{exam}{Example}[section]
\newtheorem{rema}{Remark}[section]
\newtheorem{ques}{Question}[section]
\title{Notes on scalar curvature lower bounds of steady gradient Ricci solitons}
\author{Shota Hamanaka}
\date{\today}
\begin{document}
\maketitle
 
\renewcommand{\thefootnote}{\fnsymbol{footnote}} 
\footnotetext{\emph{Keywords}: Scalar curvature, Ricci flow, Surfaces with prescribed mean curvature}     
\renewcommand{\thefootnote}{\arabic{footnote}}

\renewcommand{\thefootnote}{\fnsymbol{footnote}} 
\footnotetext{\emph{2020 Mathematics Subject Classification}: 53C20, 53E20, 53A10.}     
\renewcommand{\thefootnote}{\arabic{footnote}}
  
\begin{abstract}
We provide new type of decay estimate for scalar curvatures of steady gradient Ricci solitons.
We also give certain upper bound for the diameter of a Riemannian manifold whose $\infty$-Bakry--{\'E}mery Ricci tensor is bounded by some positive constant from below.
For the proofs, we use $\mu$-bubbles introduced by Gromov.
\end{abstract}

\section{Introduction}\label{sec1}
A triple $(M, g, f)$ consisting of a Riemannian manifold $(M, g)$ and a function $f$ on $M$ is called \textit{steady gradient Ricci soliton} if 
\[
\mathrm{Ric}_{g} + \mathrm{Hess}_{g} f = 0,
\]
where $\mathrm{Ric}_{g}$ and $\mathrm{Hess}_{g} f$ denote respectively the Ricci tensor of $g$ and the Hessian of $f$ with respect to $g.$
Hamilton have proven \cite{hamilton1993formations} that
\begin{equation}
\label{eq-1}
  R_{g} + |\nabla f|_{g}^{2} = C_{0}
\end{equation}
for some constant $C_{0}.$ Here, $R_{g}$ denotes the scalar curvature of $g.$
Moreover, by {\cite[Theorem 1.3]{zhang2009completeness}}, $R_{g} \ge 0.$
Steady Ricci solitons appear as Type II singularity models of Ricci flows with nonnegative curvature operator and positive Ricci curvature (see \cite{hamilton1993eternal, hamilton1993formations} and {\cite[Theorem 3.4]{cao2009recent}}).
First of all, from {\cite[Theorem 1.7]{munteanu2013gradient}} and {\cite[Theorem 2.2]{karp1985differential}}, the following holds.
\begin{fact}
  \label{theo-1}
  Let $(M, g)$ be a complete steady gradient Ricci soliton. 
  Then $\inf_{M} R_{g} = 0.$
\end{fact}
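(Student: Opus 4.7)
The plan is to combine Hamilton's identity \eqref{eq-1} and Zhang's nonnegativity $R_g \geq 0$ with a Liouville-type argument. Taking the trace of the steady soliton equation gives $R_g + \Delta_g f = 0$, so the function $u := -f$ satisfies $\Delta_g u = R_g \geq 0$ and is therefore subharmonic. Moreover, \eqref{eq-1} together with $R_g \geq 0$ yields $|\nabla f|_g^2 = C_0 - R_g \leq C_0$, so $u$ is globally Lipschitz with constant $\sqrt{C_0}$ and in particular has at most linear growth in the distance from any fixed basepoint.

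I would then argue by contradiction: suppose $\inf_M R_g = c > 0$. Then $\Delta_g u \geq c$ uniformly on $M$, so $u$ is a subharmonic function whose Laplacian is bounded below away from zero. The scheme is to show that such a function cannot exist on the underlying complete manifold, by invoking Karp's $L^p$-Liouville-type theorem \cite[Theorem~2.2]{karp1985differential}: roughly, a nonnegative subharmonic function with appropriately sub-quadratic $L^p$-growth on geodesic balls must be harmonic. Applied to (a shift of) $u$, this would force $R_g \equiv 0$ on $M$, contradicting $\inf_M R_g \geq c > 0$.

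To deliver the precise growth hypothesis needed by Karp's theorem, I would invoke \cite[Theorem~1.7]{munteanu2013gradient}, which on a complete steady gradient Ricci soliton provides the relevant integrability/volume-growth estimate (for instance, a sharp control on $\mathrm{Vol}(B_r(p))$ or on weighted integrals of $R_g$). Combined with the Lipschitz bound $|\nabla f|_g \le \sqrt{C_0}$, this should yield the required control $\int_{B_r(p)} u^p\, dV_g = o(r^2)$ (or the analogous annular growth condition) for some $p > 1$, closing the contradiction via Karp.

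The main obstacle is matching the exact hypothesis of Karp's theorem to the output of Munteanu's estimate: the Lipschitz control alone only gives linear growth of $u$, which is insufficient in the presence of potentially large volume growth, so the soliton-specific integral estimate from \cite{munteanu2013gradient} is the decisive ingredient. Verifying that this estimate is strong enough to enter Karp's framework — in particular, that the interaction between the polynomial growth of $f$ and the volume growth of $M$ stays within the admissible regime — is the technical heart of the argument.
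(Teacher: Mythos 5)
Your proposal assembles exactly the same three ingredients as the paper's proof — Hamilton's identity \eqref{eq-1} to get the Lipschitz bound $|\nabla f|_g\le C_0^{1/2}$ and hence linear growth of $-f$, Munteanu--Sesum \cite[Theorem~1.7]{munteanu2013gradient} for sub-exponential volume growth, and Karp's Theorem~2.2 to close the argument — so the strategy is correct. But there is a genuine gap in how you plan to close it: you identify Karp's Theorem~2.2 as an $L^p$-Liouville theorem (``a nonnegative subharmonic function with sub-quadratic $L^p$-growth on geodesic balls is harmonic''), and this is not the right form here. That version requires $\liminf_{r\to\infty} r^{-2}\int_{B_r\setminus B_1}|u|^p\,dV<\infty$ for some $p>1$; with $u=-f$ of linear growth and $\mathrm{Vol}(B_r)$ merely sub-exponential (already for polynomial volume growth $r^n$, $n\ge 2$), one gets $\int_{B_r}|u|^p\gtrsim r^{p+n}\not=O(r^2)$, so the hypothesis fails and the Liouville step collapses. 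You flag precisely this hypothesis-matching as the ``technical heart'', but the matching does not in fact go through for the $L^p$-Liouville form.

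The version of Karp's theorem that the paper actually invokes is a \emph{pointwise}-growth statement: under a linear (or more generally sub-quadratic) pointwise bound on $u$ together with sub-exponential volume growth of $(M,g)$, one concludes directly that $\inf_M\Delta u\le 0$. With $u=-f$, the Lipschitz bound from \eqref{eq-1} gives the required pointwise growth, Munteanu--Sesum gives the volume hypothesis, and $\inf_M R_g=\inf_M\Delta_g(-f)\le 0$ follows immediately; combined with $R_g\ge 0$ from \cite{zhang2009completeness} this yields $\inf_M R_g=0$ without any contradiction setup or harmonicity conclusion. If you want to see why the $\inf\Delta\le 0$ conclusion is the natural one, note that assuming $\Delta(-f)\ge c>0$ and integrating over $B_r$ gives, via the divergence theorem and $|\nabla f|\le C_0^{1/2}$, the differential inequality $c\,\mathrm{Vol}(B_r)\le C_0^{1/2}\,\mathrm{Vol}(\partial B_r)$, which forces exponential volume growth and contradicts Munteanu--Sesum — this is the pointwise mechanism that the $L^p$-Liouville route cannot replicate.
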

  \begin{proof}
    From (\ref{eq-1}), 
    \[
    \limsup_{d_{g} (p, x) \rightarrow \infty} \frac{- f(x)}{d_{g}(p, x)} < +\infty
    \]
    for some fixed point $p \in M.$
    Moreover, from {\cite[Theorem 1.7]{munteanu2013gradient}}, 
    \[
    \limsup_{r \rightarrow +\infty} \frac{\log \mathrm{Vol}_{g} ( B_{g}(p, r))}{r} = 0,
    \]
    where $\mathrm{Vol}( B_{g}(p, r) )$ denotes the volume of the geodesic ball $B_{g}(p, r)$ centered at $p \in M$ of radius $r > 0$ with respect to $g.$
    Hence from the traced soliton identity: $R_{g} + \Delta_{g} f$ = 0 and {\cite[Theorem 2.2]{karp1985differential}}, 
    \[
    \inf_{M} R_{g} = \inf_{M} \Delta_{g} (-f) \le 0.
    \]
    See \cite{chow2011mild, fernandez2011maximum, wu2013potential} for other proofs.
  \end{proof}
  \begin{rema}
  This has been proven by Fern{\'a}ndez-L{\'o}pez--Garc{\'\i}a-R{\'\i}o \cite{fernandez2011maximum} using Omori--Yau maximum principle.
  Later, Wu \cite{wu2013potential} has proven this fact as a corollary of certain decay estimate of the potential function.
    Moreover Chow--Lu \cite{chow2011mild} have shown that $\inf_{M} |\mathrm{Ric}_{g}| = 0.$
  \end{rema}
Fact \ref{theo-1} has some similarities in mean curvature flows.
In \cite{ma2019volume}, Ma proved that if $M$ is a complete convex translating soliton in $\mathbb{R}^{n+1},$ then its infimum of the scalar mean curvature is zero (see {\cite[Theorem 1~(3)]{ma2019volume}}).
For convexity of translating solitons, see also the recent work by Xie--Yu \cite{xie2023convexity}.
And, Sun \cite{sun2014mean} proved that for a (real) two-dimensional complete symplectic (resp. almost--calibrated Lagrangian) translating soliton $\Sigma$ in $\mathbb{C}^{2}$ with quadratic area growth, if its K\"{a}hler angle is not too large, then its norm of the mean curvature vector must be zero.
  
In this note, we provide another proof (using $\mu$-bubbles) of Fact \ref{theo-1} under certain much stronger condition.
Beyond Fact \ref{theo-1}, we also investigate the decay of the scalar curvature by using $\mu$-bubbles.
Wu {\cite[Corollary 1.3]{wu2013potential}} has investigated the order of decay of $\inf_{M} R_{g}.$
We give another type of decay estimate of $\inf_{M} R_{g}$ in the following our main theorem.

The main result of this note is the following theorem.
\begin{theo}
\label{main-3}
  Let $n \ge 2$ and $0 < \alpha \le 1$. Suppose that $(M^{n}, g, f)$ is an $n$-dimensional complete non-compact nonparabolic steady gradient Ricci soliton.
  Assume that 
  \begin{equation}
    \label{eq-2}
    \lim_{d_{g}(p, x) \rightarrow \infty} G(x) \cdot d^{\alpha}_{g}(p, x) = 0
  \end{equation}
  where $G(\cdot)$ is the minimal positive Green's function with the pole at $p \in M.$
  Moreover we assume that there is a constant $C \ge 0$ such that
  \begin{equation}
    \label{eq-ric2}
    \mathrm{Ric}_{g} (x) \ge -C d_{g}(p, x)^{-2} \cdot G(x)
  \end{equation}
  for all $x \in M$ with $d_{g} (p, x) >> 1.$
  Then there is a positive constant $C = C(n) > 0$ such that 
  \[
  \underset{x \rightarrow \infty}{\liminf}\, R_{g}(x)\, d^{\alpha}_{g}(x, p)
  \begin{cases}
    \le C C^{1/2}_{0} &\mathrm{if}~\alpha = 1, \\
    = 0 &\mathrm{if}~0 < \alpha < 1.
  \end{cases}
  \]
  Here $C_{0}$ denotes the constant in the equation (\ref{eq-1}).
\end{theo}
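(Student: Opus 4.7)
Suppose the conclusion fails, so that there exist $\epsilon > 0$ (with $\epsilon > C(n)\sqrt{C_{0}}$ in the case $\alpha = 1$) and $r_{0} > 0$ such that
\[
R_{g}(x) \ge \epsilon\, d_{g}(p,x)^{-\alpha} \quad \text{whenever } d_{g}(p,x) \ge r_{0}.
\]
My plan is to use this lower bound, together with the Green's function decay (\ref{eq-2}) and the Ricci lower bound (\ref{eq-ric2}), to violate the stability of a suitably constructed $\mu$-bubble when the outer radius is sufficiently large.

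\textbf{The $\mu$-bubble construction.} For $r \gg r_{0}$ I would take $\Omega_{r}$ to be an annular region of width comparable to $r$, whose two boundary components are smoothings of level sets of $d_{g}(p,\cdot)$ (or, equivalently, of $G$, whose level sets retreat to infinity by nonparabolicity together with (\ref{eq-2})). On $\Omega_{r}$ I would impose a smooth warping function $h = h(s)$ with $s = d_{g}(p,\cdot)$ of the form $h(s) = -(n-1)\varphi'(s)/\varphi(s)$, where $\varphi > 0$ solves a Sturm--Liouville equation of the shape
\[
\varphi''(s) + \frac{\epsilon}{2(n-1)}\, s^{-\alpha}\, \varphi(s) = 0
\]
on a long enough subinterval of $[r_{0}, r]$. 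The weight $h$ is calibrated so that, on a hypersurface approximately radial, the combination $h^{2} + 2\partial_{\nu}h$ matches the quantity $\epsilon s^{-\alpha}$ up to lower-order error. I would then consider the $\mu$-bubble $\Sigma_{r} \subset \Omega_{r}$ minimizing $\mathcal{A}(\Sigma) = \mathcal{H}^{n-1}(\Sigma) - \int_{\Omega} h\, dV$ among hypersurfaces separating the two boundary components of $\Omega_{r}$. Standard geometric measure theory produces such a $\Sigma_{r}$ with enough regularity when $n \le 7$; higher dimensions would be handled by the usual generic-slicing workaround.

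\textbf{Deriving the contradiction.} The stability of $\Sigma_{r}$, combined with the Gauss equation $2\mathrm{Ric}_{g}(\nu,\nu) = R_{g} - R_{\Sigma_{r}} + H^{2} - |A|^{2}$, the Euler--Lagrange equation $H_{\Sigma_{r}} = h|_{\Sigma_{r}}$, and a suitable test function (constant when $n=3$; obtained from a conformal-Laplacian first eigenfunction or by iterated $\mu$-bubble descent when $n \ge 4$) should produce an integral inequality of the schematic form
\[
\int_{\Sigma_{r}} \bigl(R_{g} - \epsilon\, s^{-\alpha}\bigr)\, \varphi(s)^{2}\, d\sigma \le \mathcal{E}(r),
\]
where $\mathcal{E}(r)$ collects the contributions from $R_{\Sigma_{r}}$ and from the Ricci error (\ref{eq-ric2}), the latter entering through a Bochner-based Laplacian comparison for $s$ (using precisely the decay furnished by $G$). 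By (\ref{eq-2}) and (\ref{eq-ric2}), $\mathcal{E}(r) \to 0$ as $r \to \infty$, while the contradiction hypothesis makes the left side nonnegative and bounded below by a positive multiple of $\mathrm{Vol}_{g}(\Sigma_{r})$. Letting $r \to \infty$ then yields the contradiction and finishes the argument for $0 < \alpha < 1$.

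\textbf{Main obstacle.} The most delicate point I expect is the selection of the warping $\varphi$ and the corresponding quantitative stability computation. For $0 < \alpha < 1$ the equation $\varphi'' + \tfrac{\epsilon}{2(n-1)}s^{-\alpha}\varphi = 0$ is oscillatory at infinity, so any $\epsilon > 0$ forces $\varphi$ to develop a zero on a long enough interval and the argument runs for arbitrary $\epsilon$, giving the vanishing $\liminf$. In the borderline case $\alpha = 1$ the equation is of Euler type and positive solutions on $[r_{0}, r]$ exist only when $\epsilon$ lies below a critical threshold; furthermore, when one translates the warped-product model to the genuine soliton geometry, the identity (\ref{eq-1}) introduces an irreducible error bounded by $|\nabla f|^{2} \le C_{0}$, which is the source of the constant $C(n)\sqrt{C_{0}}$ in the conclusion. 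The principal technical work is therefore to verify that the Ricci error in (\ref{eq-ric2}) is genuinely absorbed by the decay (\ref{eq-2}), and that no additional boundary or cross terms spoil the estimate in the borderline scaling $\alpha = 1$.
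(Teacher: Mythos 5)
Your proposal heads in the right general direction (contradiction, $\mu$-bubbles on an annulus, use (\ref{eq-2}) and (\ref{eq-ric2}) to kill errors at infinity), but it misses the one device that makes the paper's argument close, and the route you substitute in its place does not actually work.

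The paper never uses the Gauss equation, never sees $R_{\Sigma}$, and never appeals to any Schoen--Yau-type dimension descent. It works with the \emph{warped} functional $\mathcal{A}_{u,h}(\Omega)=\int_{\partial^{*}\Omega} u\,d\mathcal{H}^{n-1}-\int (\chi_{\Omega}-\chi_{\Omega_{0}})hu\,d\mathcal{H}^{n}$ and the very specific weight $u=e^{\psi f}$ with $\psi(\cdot)=G(p,\cdot)$ the Green's function and $f$ the soliton potential. When you expand the second variation of $\mathcal{A}_{u,h}$, the Laplacian of $u$ appears; writing $\Delta^{X}u$ out and invoking the \emph{traced soliton equation} $R_{g}+\Delta f=0$ produces the term $-u\psi R_{g}$. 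That is how the scalar curvature enters the stability inequality, directly, through the ambient weight --- not through ambient Ricci and the Gauss equation. The domain $X$ is also a slab between two level sets of $G$ (not of $d_{g}(p,\cdot)$), which is what makes the Li--Wang gradient estimate $|\nabla\psi|/\psi\lesssim L^{-1}+\rho$ usable on $\Sigma$. The constant $C C_{0}^{1/2}$ in the $\alpha=1$ case then comes from the single surviving cross term $2\psi^{-1}d_{g}^{\alpha}\,g(\nabla\psi,\nabla f)$, bounded by the Li--Wang estimate and $|\nabla f|\le C_{0}^{1/2}$; all the other terms decay by (\ref{eq-2}) and (\ref{eq-ric2}). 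High dimensions are handled by cutting off near the codimension-$\ge 8$ singular set (Step 2 of Proposition \ref{main-1}), not by descent.

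Your version drops the weight $u$ entirely and tries to recover $R_{g}$ from $\mathrm{Ric}_{g}(\nu,\nu)$ via the traced Gauss equation. This leaves the intrinsic term $R_{\Sigma_{r}}$ unaccounted for: for $n=3$ you could dispose of it by Gauss--Bonnet (as the paper itself notes Munteanu--Wang do, for their different argument), but for $n\ge4$ you would need either a genuine descent or a further structural input, and your one-sentence gesture toward ``conformal-Laplacian first eigenfunction or iterated $\mu$-bubble descent'' does not produce that. More importantly, nothing in your setup makes the soliton identity $R_{g}+\Delta f=0$ enter the stability inequality; without the weight $e^{\psi f}$ there is no mechanism by which the potential $f$ and the constant $C_{0}$ can appear, and the ``irreducible error bounded by $|\nabla f|^{2}$'' you invoke is a heuristic without a place in the computation. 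Finally, the claimed lower bound ``$\ge$ a positive multiple of $\mathrm{Vol}(\Sigma_{r})$'' is not needed and not used in the paper --- the contradiction in the paper is a pointwise-sign contradiction in the integrand once $L$ is large, not a volume comparison.

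The fix is to adopt the soliton-adapted weight: take $u=e^{Gf}$, let the traced soliton equation pull $R_{g}$ out of $\Delta u$, and let the Li--Wang gradient estimate plus (\ref{eq-1}) control the cross term that generates $C_{0}^{1/2}$.
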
 
\begin{rema}
  The assumption (\ref{eq-ric2}) can be replaced with 
  \[
  \mathrm{Ric}_{g}(x) \ge -C d^{-2}_{g}(p, x)~~\mathrm{and}~\lim_{d_{g}(p, x) \rightarrow +\infty} d^{2-\alpha}_{g}(p, x) \cdot G(x) = +\infty.
  \]
  See Remark \ref{rema-main-3} below.
\end{rema}
 Recall that a complete Riemannian manifold $(M, g)$ is \textit{nonparabolic} if it admits a positive symmetric Green's function.
 From Propositions \ref{prop-harmonic} and \ref{prop-green} below, we can immediately obtain the following corollary.
 \begin{coro}
   Let $n \ge 2.$ Suppose that $(M^{n}, g, f)$ is an $n$-dimensional complete non-compact nonparabolic steady gradient Ricci soliton with $\mathrm{Ric}_{g} \ge 0.$
  Assume that 
  \[
   \lim_{d_{g}(p, x) \rightarrow \infty} G(x) \cdot d_{g}(p, x) = 0
  \]
  where $G(\cdot)$ is the minimal positive Green's function with the pole at $p \in M.$
  Then there is a positive constant $C = C(n) > 0$ such that 
  \[
  \underset{x \rightarrow \infty}{\liminf}\, R_{g}(x) d_{g}(x, p) \le C C^{1/2}_{0}.
  \]
  Here $C_{0}$ denotes the constant in the equation (\ref{eq-1}).
 \end{coro}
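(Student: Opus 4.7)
The plan is to deduce the corollary as a direct specialization of Theorem \ref{main-3} with $\alpha = 1$. Inspecting that theorem, its first conclusion reads $\liminf_{x \to \infty} R_{g}(x) d_{g}(x, p) \leq C C_{0}^{1/2}$, which is exactly the estimate sought in the corollary. So the entire argument reduces to checking that each hypothesis of Theorem \ref{main-3} is satisfied under the (stronger) assumptions made in the corollary.

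The completeness, noncompactness, nonparabolicity, and steady gradient Ricci soliton structure are carried over verbatim. Setting $\alpha = 1$ in the decay hypothesis (\ref{eq-2}) yields precisely $\lim_{d_{g}(p, x) \to \infty} G(x) \cdot d_{g}(p, x) = 0$, which is the Green's function assumption imposed in the corollary. The only condition requiring thought is the Ricci lower bound (\ref{eq-ric2}); but since $\mathrm{Ric}_{g} \geq 0$ holds by assumption and the right-hand side $-C d_{g}(p, x)^{-2} G(x)$ is non-positive for every $C \geq 0$ (as $G > 0$ away from $p$), (\ref{eq-ric2}) is automatic. One may simply take $C = 0$.

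With every hypothesis of Theorem \ref{main-3} in force, the first case of its conclusion applies and delivers the claimed bound. The role of Propositions \ref{prop-harmonic} and \ref{prop-green}, referenced in the excerpt, is presumably to supply the standard facts about the minimal positive Green's function on nonparabolic manifolds with $\mathrm{Ric}_{g} \geq 0$---existence, positivity, uniqueness, or a comparison principle---so that the symbol $G$ is unambiguously defined and the decay hypothesis can be meaningfully formulated. There is no substantive obstacle in this corollary itself: it is a clean repackaging of Theorem \ref{main-3} in the sign regime $\mathrm{Ric}_{g} \geq 0$, where the auxiliary Ricci bound (\ref{eq-ric2}) becomes vacuous.
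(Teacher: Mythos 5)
Your proof is correct and is exactly what the paper intends: the corollary is Theorem~\ref{main-3} with $\alpha=1$, where $\mathrm{Ric}_g\ge 0$ makes the Ricci hypothesis (\ref{eq-ric2}) vacuous (take $C=0$), and the stated Green's function decay is literally (\ref{eq-2}) with $\alpha=1$. The only minor imprecision is your speculation about the role of Propositions~\ref{prop-harmonic} and~\ref{prop-green}: in the paper's logic they are cited because they are the tools invoked inside the proof of Theorem~\ref{main-3} (the gradient estimate for the Green's function and the Li--Yau Green's function bound) and because, under $\mathrm{Ric}_g\ge 0$, Proposition~\ref{prop-green} guarantees $G\to 0$ so that the decay hypothesis is at least consistent; they are not needed to define $G$, which exists by nonparabolicity alone.
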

 \begin{rema}
Suppose that $(M^{n}, g, f)$ is an $n$-dimensional complete non-compact steady gradient Ricci soliton with $\mathrm{Ric}_{g} \ge 0.$
If there is a point $p \in M$ such that $\mathrm{Ric}_{g}(p) = 0,$ then $(M, g)$ is Ricci flat (see {\cite[Remark 6.58]{chow2023hamilton}}).
\end{rema}
\begin{rema}
  Munteanu--Wang have recently proven {\cite[Lemma 2.4]{munteanu2024geometry}} that $C$ can be taken to be zero in dimension three without assuming that $(M, g, f)$ is a steady gradient Ricci soliton and the assumption (\ref{eq-2}).
\end{rema}
This paper is organized as follows.
We prove our main theorem (Theorem \ref{main-3}) in Subsection \ref{subsection-3}.
Before proving Theorem \ref{main-3}, we provide some propositions in Section \ref{section-propositions}.
The proofs of Propositions \ref{main-1}, \ref{main-2} are given respectively in subsections \ref{subsection-1}, \ref{subsection-2} as a prelude to proving our main theorem.
In Section \ref{section-pre}, we provide some preliminaries that is key tools to prove our Propositions \ref{main-1}, \ref{main-2} and Theorem \ref{main-3}.
In Appendix ($=$ Section \ref{section-appendix}), we give a (partially) new upper bound of the diameter of a Riemannian manifold whose $\infty$-Bakry--{\'E}mery Ricci tensor is bounded by some positive constant from below.

\subsection*{Acknowledgements}
\,\,\,\,\,\,\, Part of this paper was written during a short stay at Mathematics M\"{u}nster.
The author would be grateful to Prof. Rudolf Zeidler and Ms. Claudia--Maria R\"{u}diger their hospitality and kind support.
The author would like to thank Prof. Keita Kunikawa for notifying him the paper \cite{sun2014mean}.
The author also would like to thank Prof. Homare Tadano for notifying him the paper \cite{wu2018myers}.
Finally, the author also would like to thank Prof. Yohei Sakurai for asking a question about our theorems in the cases of $n \ge 8$.
This work was supported by JSPS KAKENHI Grant Number 24KJ0153.

\section{Propositions}
\label{section-propositions}
In this section and Section \ref{section-proof}, we provide some propositions which are weaker than Fact \ref{theo-1} and prove them as a prelude to proving our main theorem (Theorem \ref{main-3}).

For a steady gradient Ricci soliton $(M, g, f),$ from (\ref{eq-1}) and $R_{g} \ge 0$ ({\cite[Theorem 1.3]{zhang2009completeness}}), it turns out that $C_{0}^{-1}\, \underset{M}{\inf}\, R_{g}$ ($C_{0}$ is the constant in (\ref{eq-1})) is a scale invariant non-negative constant.
Here, ``scale invariant" means that this quantity is invariant under the scaling: $g \mapsto c \cdot g,$ where $c > 0$ is some positive constant.
\begin{prop}
  \label{main-1}
Let $n \ge 2.$ Suppose that $(M^{n}, g, f)$ is an $n$-dimensional complete non-compact steady gradient Ricci soliton.
Assume that there is a constant $C \ge 0$ and a point $p \in M$ such that
\begin{equation}
  \label{eq-ric1}
\mathrm{Ric}_{g}(x) \ge - C d_{g}(p, x)^{-\alpha} 
\end{equation}
for all $x \in M$ with $d_{g}(p, x) >> 1$ for some positive constant $\alpha > 0.$
Then
\[
  C_{0}^{-1}\, \underset{M}{\inf}\, R_{g} \le A(n),
\]
  where
  \[
  A(n) := \frac{\frac{4}{3} + \frac{(n-3)(n+1)}{4(n-1)}}{\frac{7}{3} + \frac{(n-3)(n+1)}{4(n-1)}}.
  \]
\end{prop}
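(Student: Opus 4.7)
The plan is to argue by contradiction using Gromov's $\mu$-bubbles. Suppose $C_0^{-1} \inf_M R_g > A(n)$, so that $R_g \ge \lambda C_0$ everywhere for some $\lambda > A(n)$. Hamilton's identity (\ref{eq-1}) then gives $|\nabla f|^2 \le (1-\lambda) C_0$, and the hypothesis (\ref{eq-ric1}) says that $\mathrm{Ric}_g$ becomes asymptotically non-negative. Together these provide, at infinity, an almost positive scalar curvature structure coexisting with a globally Lipschitz potential $f$, and $\mu$-bubbles are designed to rule out precisely such configurations in a quantitative way.

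First I would fix a large $R$, work on the annular region $\Omega_R = \{R \le d_g(p, \cdot) \le 2R\}$, and choose a smooth prescribed mean curvature $h$ on $\Omega_R$ with $|h|$ strictly greater than the (signed) mean curvatures of the two boundary components; the Ricci decay (\ref{eq-ric1}) combined with Laplacian comparison makes this possible once $R$ is large. Standard geometric measure theory then produces a smooth stable $\mu$-bubble $\Sigma \subset \mathrm{int}(\Omega_R)$ minimising
\[
\mathcal{A}(\Omega') := \mathcal{H}^{n-1}(\partial^* \Omega') - \int_{\Omega'} h\, d\mathrm{vol}_g.
\]

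Next I would combine the stability inequality
\[
\int_\Sigma |\nabla_\Sigma \varphi|^2 \ge \int_\Sigma \bigl(\mathrm{Ric}_g(\nu,\nu) + |A|^2 + \nu(h)\bigr) \varphi^2
\]
with the Gauss equation $2\mathrm{Ric}_g(\nu,\nu) = R_g - R_\Sigma + h^2 - |A|^2$ and the elementary pinching $|A|^2 \ge h^2/(n-1)$. A Schoen--Yau style conformal rearrangement on $\Sigma$, with exponent tuned to $n$, produces the combinatorial coefficient $(n-3)(n+1)/(4(n-1))$. Then I would optimise the radial profile of $h$ against the two scale-invariant bounds $R_g \ge \lambda C_0$ and $h^2 \le (1-\lambda) C_0$ (the latter coming from (\ref{eq-1})); this produces the numerator constant $4/3$ and the denominator constant $7/3$ of $A(n)$. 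Sending $R \to \infty$ washes out the Ricci remainder $-C d_g^{-\alpha}$ and yields $\lambda \le A(n)$, contradicting $\lambda > A(n)$.

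The main difficulty will be arranging existence of the $\mu$-bubble with interior control, since on a non-compact soliton the minimiser of $\mathcal{A}$ could a priori escape to $\partial \Omega_R$; it is precisely the hypothesis (\ref{eq-ric1}) that, via Laplacian comparison, enables the strict barriers on $h$ at $\partial \Omega_R$ that force the minimiser into the interior. Once $\Sigma$ exists, the remaining algebraic optimisation in the stability inequality is routine but must be carried out carefully in order to produce the explicit rational function $A(n)$ rather than a suboptimal upper bound.
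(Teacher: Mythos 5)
Your proposal correctly identifies the broad framework (argue by contradiction, put a stable $\mu$-bubble in a large annulus, use the stability inequality, send the radius to infinity), but it misses the mechanism by which the paper actually makes the scalar curvature appear, and this is not a cosmetic difference.

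The paper does not use the plain functional $\mathcal{H}^{n-1}(\partial^{*}\Omega') - \int_{\Omega'} h$ and it does not invoke the traced Gauss equation. Instead it uses the \emph{warped} $\mu$-bubble functional $\mathcal{A}_{u,h}$ with the weight $u = e^{f}$ (the case $\psi\equiv 1$ of $u = e^{\psi f}$). The point of this choice is that the second variation contains $\Delta^{X}u$, and the traced soliton equation $R_{g} + \Delta f = 0$ turns $\Delta u = u(\Delta f + |\nabla f|^{2}) = u(-R_{g} + |\nabla f|^{2})$, so the ambient scalar curvature enters linearly and with the right sign without any Gauss-equation bookkeeping and without the intrinsic scalar curvature $R_{\Sigma}$ of the bubble ever appearing. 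Your route via $2\,\mathrm{Ric}(\nu,\nu) = R_{g} - R_{\Sigma} + H^{2} - |A|^{2}$ forces you to control $R_{\Sigma}$; the ``Schoen--Yau conformal rearrangement'' you propose to do this is not executed, and there is no reason it would close to give the closed-form expression $A(n)$. In the paper, the $4/3$ comes from the auxiliary inequality $\int u|\nabla_{\Sigma}\phi|^{2} - \phi^{2}\Delta_{\Sigma}u \le \tfrac{4}{3}\int|\nabla_{\Sigma}(u^{1/2}\phi)|^{2}$, and the factor $(n-3)(n+1)/(4(n-1))$ is the $\delta\to 1$ limit of a Young's inequality optimization applied to the cross term $H\,g(\nabla f,\nu)$ together with the trace bound $|A_{\Sigma}|^{2}\ge H^{2}/(n-1)$. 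None of this is in your plan.

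There is also a concrete confusion: you write ``$h^{2}\le(1-\lambda)C_{0}$, the latter coming from (\ref{eq-1}).'' Hamilton's identity bounds $|\nabla f|^{2}$, not the prescribed mean curvature function $h$, which is chosen by hand and in fact blows up like $\tan$ near $\partial_{\pm}X$ so that Proposition \ref{prop-existence} applies. Relatedly, the existence of the minimizer in the interior does not require Laplacian comparison or mean-curvature barriers at $\partial\Omega_{R}$; it is automatic once $h\to\pm\infty$ at $\partial_{\mp}X$. So the structural pieces you flag as ``the main difficulty'' (barriers) are not where the work is, while the piece that actually produces $A(n)$ — the choice $u=e^{f}$, the traced soliton identity inside $\Delta u$, and the explicit optimization — is absent from your outline.
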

  \begin{prop}
\label{main-2}
  Let $n \ge 2.$ Suppose that $(M^{n}, g, f)$ is an $n$-dimensional complete non-compact nonparabolic steady gradient Ricci soliton.
  Assume that $G(x) \rightarrow 0$ as $d_{g}(p, x) \rightarrow \infty$ (i.e., for any $\varepsilon > 0,$ there is $\delta = \delta(\varepsilon) > 0$ such that $G(x) < \varepsilon$ for all $x \in M$ with $d_{g}(p, x) \ge \delta$), and
  there is a constant $C \ge 0$ such that
  \begin{equation}
    \label{eq-ric3}
    \mathrm{Ric}_{g}(x) \ge -C d_{g}(p, x)^{-2} \cdot G(x)
  \end{equation}
  for all $x \in M$ with $d_{g} (p, x) >> 1.$
  Here, $G(\cdot)$ is the minimal positive Green's function with the pole at $p \in M.$
  Then $\underset{M}{\inf}\, R_{g} = 0.$
  \end{prop}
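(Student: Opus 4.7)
The plan is to argue by contradiction, constructing via Gromov's $\mu$-bubble a stable hypersurface in a level-set annulus of the Green's function whose stability inequality, combined with the soliton identities and the given Ricci lower bound, cannot hold when $\inf_M R_g > 0$. Assume for contradiction that $R_g \ge \varepsilon_0 > 0$ on $M$. Since $G(x) \to 0$ as $d_g(p,x) \to \infty$, the super-level sets $\{G > t\}$ are relatively compact neighborhoods of $p$ exhausting $M$ as $t \to 0^+$, and they play the role geodesic balls play in more classical arguments (this is where nonparabolicity and the assumption $G \to 0$ enter the setup).

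First, I would fix $0 < t_- < t_+$ small and work inside the annular region $\Omega_0 = \{t_- \le G \le t_+\}$. Following Gromov, I set up the $\mu$-bubble functional
\[
\mathcal{A}(\Omega) = \mathcal{H}^{n-1}(\partial^* \Omega) - \int_\Omega h \, d\mathrm{vol}_g
\]
with a smooth prescribing function $h$ blowing up on $\partial \Omega_0$. A natural calibration is to build $h$ from $|\nabla G|/G$ (equivalently, the $\nu$-derivative of $\log G$), so that the scaling of $\nu(h)$ matches the critical scaling of the hypothesis $\mathrm{Ric}_g \ge -C d_g^{-2} G$. Gromov's existence theorem then produces a minimizer whose reduced boundary $\Sigma$ is a smooth hypersurface (away from a singular set of Hausdorff codimension $\ge 7$) in $\mathrm{Int}(\Omega_0)$ with mean curvature $H_\Sigma = h$.

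Next, I would apply the stability inequality on $\Sigma$, combined with the Gauss equation $2\mathrm{Ric}_g(\nu,\nu) = R_g - R_\Sigma + |A|^2 - h^2$ and the trace bound $|A|^2 \ge h^2/(n-1)$, to obtain, for every $\phi \in C^\infty(\Sigma)$, an estimate of the schematic form
\[
\int_\Sigma \left( \tfrac12 R_g - \tfrac12 R_\Sigma + \tfrac{4-n}{2(n-1)} h^2 + \nu(h) + \mathrm{Ric}_g(\nu,\nu) \right) \phi^2 \,\le\, \int_\Sigma |\nabla_\Sigma \phi|^2 .
\]
The $\nu(h)$ and $\mathrm{Ric}_g(\nu,\nu)$ error terms absorb one another precisely thanks to the critical form of the Ricci hypothesis, and the soliton identity $R_g + |\nabla f|^2 = C_0$ yields the uniform bound $|\nabla f| \le \sqrt{C_0}$ which controls auxiliary terms. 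Under $R_g \ge \varepsilon_0$, a judicious choice of test function (constant together with Gauss--Bonnet in low dimensions) forces a uniform upper bound on the $(n-1)$-volume of $\Sigma$.

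Finally, I would let $t_+ \to 0^+$, which pushes $\Sigma$ out to infinity; since $\{G > t\}$ exhausts $M$, the hypersurface $\Sigma$ must enclose arbitrarily large regions, and one argues that $\mathcal{H}^{n-1}(\Sigma) \to \infty$, contradicting the uniform bound and forcing $\inf_M R_g = 0$. The main obstacle is the dimension-dependent coefficient $(4-n)/(2(n-1))$ multiplying $h^2$: for $n \ge 4$ it is negative, so the naive trace inequality $|A|^2 \ge h^2/(n-1)$ does not close the estimate, and one must either substitute a conformal test function $\phi = u^{(n-3)/(n-2)}$ on $\Sigma$ in the Schoen--Yau style or iterate the $\mu$-bubble construction, turning the clean low-dimensional Gauss--Bonnet step into a more delicate spectral argument.
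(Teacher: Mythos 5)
Your plan uses the \emph{unweighted} $\mu$-bubble functional $\mathcal{A}(\Omega) = \mathcal{H}^{n-1}(\partial^*\Omega) - \int_\Omega h$ and tries to bring $R_g$ into the stability inequality via the Gauss equation $2\mathrm{Ric}_g(\nu,\nu) = R_g - R_\Sigma - |A|^2 + H^2$, leading to a dimension-dependent coefficient of $h^2$ and (in your telling) a problematic $R_\Sigma$ term that you propose to handle by Gauss--Bonnet or a Schoen--Yau iteration. This is not the mechanism the paper uses, and it has a genuine gap: for a steady gradient Ricci soliton there is no reason the induced scalar curvature $R_\Sigma$ or a conformal iteration on $\Sigma$ should be controllable, and your proposed endgame (that $\mathcal{H}^{n-1}(\Sigma)$ must blow up as the annulus $\{t_-\le G\le t_+\}$ is pushed to infinity) has no justification --- on the Bryant soliton, for instance, the level sets of $G$ need not have growing area. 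Incidentally, your coefficient $(4-n)/(2(n-1))$ for $h^2$ is also off: working out $|A_\Sigma|^2 + \mathrm{Ric}_g(\nu,\nu)$ with the Gauss equation and the trace bound $|A_\Sigma|^2 \ge h^2/(n-1)$ gives $n/(2(n-1))$, which is positive in every dimension, so the dimensional obstruction you flag is not actually present even in your own setup.

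The key idea you are missing is the \emph{warped} $\mu$-bubble with weight $u = e^{\psi f}$, where $\psi = G(p,\cdot)$ is precisely the Green's function and $f$ is the soliton potential. Then $\Delta^X u$ contains $u\psi\,\Delta^X f = -u\psi R_g$ by the traced soliton identity $R_g + \Delta f = 0$, which is how $R_g$ enters the second-variation inequality \emph{directly}, with the helpful weight $\psi = G$ attached, and without ever invoking the Gauss equation or $R_\Sigma$. The remaining terms from $\Delta^X u$, $\nabla^X u$ and $|A_\Sigma|^2 \ge \frac{1}{n-1}H^2$ are then organized and estimated using $|\nabla f|\le C_0^{1/2}$ and $|f(x)|\lesssim d_g(p,x)$ (from the Hamilton identity), the Cheng--Yau/Li--Wang gradient estimate $|\nabla G|/G \lesssim 1/L$ on the annulus $\{a\le G\le 2a\}\subset M\setminus B(p,L)$ (this is where the hypothesis $G\to 0$ and nonparabolicity enter), and the Ricci hypothesis $\mathrm{Ric}_g \ge -Cd^{-2}G$ to absorb the $\mathrm{Ric}_g(\nu,\nu)$ term against the overall $u\psi = u\,G$ weight. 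A prescribing function $h$, built as a rational function of $\psi$ blowing up at the level sets $\psi = a$ and $\psi = 2a$, is then chosen so that $-A\psi^{-1}h^2 + \psi^{-1}|\nabla h| \le 0$; plugging everything into the stability inequality yields $0 \le \frac{d^2}{dt^2}\big|_{t=0}\mathcal{A}_{u,h}(\Phi_t(\Omega)) < 0$ once $L$ is large. The contradiction is a direct sign violation, not an area comparison. Finally, the $n\ge 8$ regularity issue is handled by a cutoff around the codimension-$\ge 7$ singular set (following Bray--Gui--Liu--Zhang), not by a Schoen--Yau dimensional reduction, so the obstacle you identify at the end is also not the relevant one.
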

  \begin{rema}
  \begin{itemize}
  \item[(1)] The assumption (\ref{eq-ric3}) can be replaced with 
  \[
  \mathrm{Ric}_{g}(x) \ge -C d^{-2}_{g}(p, x)~~\mathrm{and}~\lim_{d_{g}(p, x) \rightarrow +\infty} d_{g}(p, x) \cdot G(x) = +\infty.
  \]
  See Remark \ref{rema-2.2} below.
    \item[(2)] From Propositions \ref{prop-harmonic} and \ref{prop-green} below, the assumption of Proposition \ref{main-2} is satisfied if $\mathrm{Ric}_{g} \ge 0.$ 
    \end{itemize}
  \end{rema}
  \begin{rema}
    Munteanu--Wang have recently proven the same statement for dimension 3 without assuming that $(M, g, f)$ is a steady gradient Ricci soliton {\cite[Theorem 3.5]{munteanu2024geometry}}.
    Their proof is based on their analysis of certain harmonic functions and it relies on the fact that the manifold is three dimensional.
    Our proof is instead relies on $\mu$-bubbles introduced by Gromov.
  \end{rema}
    A complete non-compact Riemannian manifold is nonparabolic if and only if it has at least one nonparabolic end. (See immediately after Definition 20.5 in \cite{li2012geometric}.)
    Munteanu--Sesum proved in {\cite[Theorem 1.5]{munteanu2013gradient}} that any steady gradient Ricci solotion has at most one nonparabolic end.
    
  From the recent result of Bamler--Chan--Ma--Zhang {\cite[Theorem 1.1]{bamler2023optimal}} and the criterion (\ref{eq-criterion}) below, if $(M^{n}, g, f)$ is a complete steady gradient Ricci soliton with $n \ge 4,$ $\mathrm{Ric}_{g} \ge 0$ and the corresponding Ricci flow $(M, g_{t})_{t \in \mathbb{R}}$ has a uniformly bounded Nash entropy (see the condition $(1.4)$ in \cite{bamler2023optimal}), which is either 
\begin{itemize}
  \item $(M, g_{t})_{t \in \mathbb{R}}$ arises as a singularity model, or
  \item $(M, g)$ has bounded curvature,
\end{itemize}
then $(M, g)$ is nonparabolic.

\begin{exam}
  The Cigar soliton is the unique two dimensional complete steady gradient Ricci soliton with positive Gaussian curvature. 
  Moreover, since the Cigar soliton has linear volume growth, it is parabolic (i.e., it is not nonparabolic).
\end{exam}

\begin{exam}
  $(\mathrm{Cigar} \times \mathbb{R}, g_{\mathrm{Cigar}} + g_{eucl})$ is a three dimensional steady gradient Ricci soliton with nonnegative curvature.
  Since the Cigar soliton has linear volume growth, it turns out that $\mathrm{Cigar} \times \mathbb{R}$ has quadratic volume growth (by the coarea formula).
  Hence $\mathrm{Cigar} \times \mathbb{R}$ is parabolic.
\end{exam}
\begin{exam}
  The $n$-dimensional Bryant solitons on $\mathbb{R}^{n}~(n \ge 3)$ are rotationally symmetric and positive sectional curvature.
  The volume of geodesic balls $B_{r}(o)$ grow on the order $r^{\frac{n+1}{2}},$
  and the curvature decay is of $O(r^{-1}).$
  In particular, the $n$-dimensional Bryant soliton satisfies $\lim_{d(p, x) \rightarrow +\infty} R(x) = 0,$ and
  it is parabolic if $n = 3$ and nonparabolic if $n \ge 4.$
  \end{exam}
  \begin{exam}
Lai \cite{lai2020family} recently found a family of $n$-dimensional $(n \ge 3)$ steady gradient Ricci soliton which is $\mathbb{Z}_{2} \times O(n-1)$-symmetric but not rotationally symmetric with positive curvature operator.
Moreover, she \cite{lai20223d} has also proven that there exists a $\mathbb{Z}_{2} \times O(2)$-symmetric three dimensional flying wing which is asymptotic to a sector with angle $\theta$ for all $\theta \in (0, \pi).$
(Note that $\theta = 0, \pi$ respectively corresponds to the Bryant soliton and $\mathrm{Cigar} \times \mathbb{R}.$)
From {\cite[Corollary 1.5]{lai2020family}}, for such a three-dimensional flying wing $(M, g, f),$ $\inf_{M} R_{g} = 0.$
    \begin{ques}
      Are such three-dimensional flying wings nonparabolic?
    \end{ques}
  \end{exam}

\section{Preliminaries}
\label{section-pre}
\subsection{Green's functions on a Riemannian manifold with Ricci lower bound}
Let $(M^{n}, g)$ be a smooth complete non-compact Riemannian manifold.
Recall that it is called \textit{nonparabolic} if it admits a positive symmetric Green's function.
It is well known that in this case the minimal positive Green's function $G(x, y)$ may be obtained as the limit of the Dirichlet Green's function of a sequence of compact exhaustive domains of the manifold.
Then,
\[
\Delta_{x} G(x, y) = - \delta_{x} (y),~~G(x, y) = G(y, x) > 0.
\]
The first key tool used to prove our main theorems is the following gradient estimate for positive harmonic functions by Li--Wang \cite{li2002complete} (see also \cite{li2012geometric}).
\begin{prop}[{\cite[Lemma 2.1]{li2002complete}} or {\cite[Theorem 6.1]{li2012geometric}}]
\label{prop-harmonic}
  Let $(M^{n}, g)$ be a complete Riemannian manifold.
  Suppose that $h$ is a positive harmonic function defined on the geodesic ball $B_{g} (p, 2R) \subset M$ of radius $2R$ centered at $p$ and $B_{g} (p, 2R) \cap \partial M = \emptyset,$ and
  \[
  \mathrm{Ric}_{g} \ge -(n-1) \rho^{2}
  \]
  for some constant $\rho \in \mathbb{R}.$
  Then there is a positive constant $C = C(n) > 0$ such that
  \[
  \frac{|\nabla h|^{2}_{g}(x)}{h(x)^{2}} \le C (1 + \varepsilon^{-1}) R^{-2} + \frac{(4(n-1)^{2} + 2\varepsilon) \rho^{2}}{4-2\varepsilon}
  \]
  for all $x \in B_{g}(p, R)$ and for any $\varepsilon < 2.$
\end{prop}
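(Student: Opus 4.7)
The plan is to apply the Li--Yau gradient-estimate technique, combining the Bochner formula with a Kato-type improvement available for logs of positive harmonic functions and a cutoff argument driven by the Laplacian comparison theorem. First I would set $u := \log h$ (well defined since $h>0$) and $Q := |\nabla u|_{g}^{2} = |\nabla h|_{g}^{2}/h^{2}$. From $\Delta h = 0$ one has $\Delta u = -Q$, and the Bochner identity gives
\[
\tfrac{1}{2}\Delta Q \;=\; |\nabla^{2} u|_{g}^{2} - \langle \nabla u, \nabla Q\rangle_{g} + \mathrm{Ric}_{g}(\nabla u,\nabla u).
\]

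Next I would bound $|\nabla^{2} u|_{g}^{2}$ from below by choosing an orthonormal frame whose first vector is $\nabla u/|\nabla u|$ and writing $a := \nabla^{2} u(e_{1},e_{1})$. Then $\langle \nabla u,\nabla Q\rangle = 2 Q a$ and $\sum_{j\ge 2}\nabla^{2} u(e_{j},e_{j}) = -Q - a$, so Cauchy--Schwarz on the $(n-1)$ remaining diagonal entries yields
\[
|\nabla^{2} u|_{g}^{2} \;\ge\; a^{2} + \frac{(Q+a)^{2}}{n-1} + \frac{|\nabla_{\perp} Q|_{g}^{2}}{2 Q},
\]
where $\nabla_{\perp}$ denotes the component of $\nabla Q$ orthogonal to $\nabla u$. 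Substituting into Bochner and using $\mathrm{Ric}_{g}\ge -(n-1)\rho^{2}$ produces a differential inequality for $Q$ in which the leading curvature-free term is proportional to $Q^{2}/(n-1)$; this factor of $(n-1)$ is the source of the $4(n-1)^{2}$ appearing in the statement, and must not be replaced by the cruder bound $|\nabla^{2} u|_{g}^{2}\ge (\Delta u)^{2}/n = Q^{2}/n$.

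Finally I would localize with a radial cutoff $\varphi \in C_{c}^{\infty}(B_{g}(p,2R))$ satisfying $\varphi\equiv 1$ on $B_{g}(p,R)$, $|\nabla \varphi|_{g}^{2}/\varphi \le C(n)/R^{2}$, and, by Laplacian comparison under $\mathrm{Ric}_{g}\ge -(n-1)\rho^{2}$, $\Delta \varphi \ge -C(n)(1+\rho R)/R^{2}$. Evaluating $F := \varphi Q$ at an interior maximum $x_{0}\in B_{g}(p,2R)$, the conditions $\nabla F(x_{0})=0$ and $\Delta F(x_{0})\le 0$ give $\nabla Q = -(Q/\varphi)\nabla\varphi$ at $x_{0}$, which, inserted into the Bochner inequality derived above, yields an algebraic inequality quadratic in $\varphi Q$. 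Solving that quadratic, absorbing the cross terms via Cauchy--Schwarz with a free parameter $\varepsilon<2$, and restricting to $B_{g}(p,R)$ gives exactly the stated bound. The main obstacle is the careful bookkeeping of constants: the harmonic-function improvement in the Hessian bound is essential to reach $4(n-1)^{2}$ rather than $4n^{2}$ in front of $\rho^{2}$, and the parameter $\varepsilon$ must be introduced in precisely the place where the $\rho R/R^{2}$ cross term coming from the Laplacian comparison is absorbed into the leading $Q^{2}/(n-1)$ term, producing the asymmetric factors $(1+\varepsilon^{-1})$ and $(4-2\varepsilon)^{-1}$ in the final estimate.
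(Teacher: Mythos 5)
This proposition is not proved in the paper; it is quoted verbatim as a known result from Li--Wang \cite{li2002complete} (Lemma~2.1) and Li \cite{li2012geometric} (Theorem~6.1), so there is no in-paper argument to compare against. Your outline reconstructs exactly the standard Yau/Li--Yau gradient-estimate proof that those references use: set $u=\log h$, apply Bochner to $Q=|\nabla u|^{2}$, use the harmonic-improved Hessian bound in the frame adapted to $\nabla u$ (your decomposition $|\nabla^{2}u|^{2}\ge a^{2}+\frac{(Q+a)^{2}}{n-1}+\frac{|\nabla_{\perp}Q|^{2}}{2Q}$ is correct, and correctly explains why $(n-1)^{2}$ rather than $n^{2}$ appears), then localize with a cutoff controlled by Laplacian comparison and run the maximum principle on $\varphi Q$. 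The Young-inequality step with a free parameter to absorb the $\rho/R$ cross term, which produces the $(1+\varepsilon^{-1})R^{-2}$ and $(4-2\varepsilon)^{-1}$ factors, is also the right place to introduce $\varepsilon$. The one thing you do not actually do is carry out the final algebra to land on the precise coefficients $4(n-1)^{2}+2\varepsilon$ and $4-2\varepsilon$; you only assert that it works out. Since these particular constants are exactly those appearing in Li--Wang's Lemma~2.1, this is a reasonable level of detail for a blind reconstruction, but a complete proof would need the quadratic in $\varphi Q$ solved explicitly to confirm that the stated parametrization in $\varepsilon$ is the one that emerges.
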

 By {\cite[Corollary of Theorem 2]{varopoulos1981poisson}}, a complete non-compact Riemannian manifold $(M^{n}, g)$ with $\mathrm{Ric}_{g} \ge 0$ is nonparabolic if and only if 
  \begin{equation}
  \label{eq-criterion}
     \int_{1}^{\infty} \frac{t}{\mathrm{Vol}_{g} (B_{g}(p, t))}\, dt < +\infty,
  \end{equation}
  where $\mathrm{Vol}_{g} (B_{g}(p, t))$ is the volume of geodesic ball $B_{g}(p, t)$ of radius $t$ centered at $p$ with respect to $g.$
  The second key tool is the following estimate of the minimal positive Green's function $G(x, y)$ by Li--Yau {\cite[Theorem 5.2]{li1986parabolic}}.
  \begin{prop}[{\cite[Theorem 5.2]{li1986parabolic}}]
  \label{prop-green}
     Suppose that $(M^{n}, g)$ is a complete nonparabolic Riemannian manifold with $\mathrm{Ric}_{g} \ge 0.$
     Then the minimal positive Green's function $G(x, y)$ satisfies that there is a positive constant $C = C(n) > 0$ such that
     \[
     C^{-1} \int_{d_{g}(x, y)}^{\infty} \frac{t}{\mathrm{Vol}_{g} (B_{g}(x, t))}\, dt \le G(x, y) \le C \int_{d_{g}(x, y)}^{\infty} \frac{t}{\mathrm{Vol}_{g} (B_{g}(x, t))}\, dt
    \]
    for all $x \neq y.$
    In particular, $G(x, y) \rightarrow 0$ as $d_{g}(x, y) \rightarrow \infty.$
  \end{prop}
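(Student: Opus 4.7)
The plan is to derive the two-sided estimate by integrating the heat kernel in time and invoking Li--Yau's Gaussian bounds. Start from the representation
\[
G(x,y) = \int_{0}^{\infty} H(x,y,t)\, dt,
\]
valid on any complete nonparabolic manifold, where $H$ is the minimal positive heat kernel. Under $\mathrm{Ric}_g \ge 0$, the Li--Yau parabolic Harnack inequality yields the two-sided Gaussian estimate
\[
\frac{c_1 \, e^{-c_2 d_g(x,y)^2/t}}{\mathrm{Vol}_g(B_g(x,\sqrt{t}))} \le H(x,y,t) \le \frac{c_3 \, e^{-c_4 d_g(x,y)^2/t}}{\mathrm{Vol}_g(B_g(x,\sqrt{t}))}
\]
for constants $c_i = c_i(n) > 0$. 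This is the main technical input.

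For the upper bound I would split $\int_0^\infty H(x,y,t)\, dt$ at $t_0 := d_g(x,y)^2$. On $t \ge t_0$ the Gaussian factor is bounded, so the contribution is at most a constant multiple of $\int_{t_0}^\infty dt/\mathrm{Vol}_g(B_g(x,\sqrt{t}))$; the substitution $s = \sqrt{t}$ converts this into $\int_{d_g(x,y)}^\infty s\, ds/\mathrm{Vol}_g(B_g(x,s))$, which is the target. On $t < t_0$ the Gaussian decay $e^{-c_4 d_g(x,y)^2/t}$ is strong enough that, combined with Bishop--Gromov volume doubling (which compares $\mathrm{Vol}_g(B_g(x,\sqrt{t}))$ to $\mathrm{Vol}_g(B_g(x,d_g(x,y)))$ up to polynomial factors), the short-time piece is absorbed into the same tail integral after enlarging constants. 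The lower bound is analogous: discard $t < t_0$, use the Gaussian lower bound (bounded below by a positive constant on $t \ge t_0$), and change variables in the same way.

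The auxiliary assertion that $G(x,y) \to 0$ as $d_g(x,y) \to \infty$ is then immediate from (\ref{eq-criterion}): nonparabolicity together with $\mathrm{Ric}_g \ge 0$ forces $\int_1^\infty t\, dt/\mathrm{Vol}_g(B_g(x,t))$ to converge, so its tails vanish. The main obstacle is the two-sided Gaussian heat kernel estimate itself, which is the central result of \cite{li1986parabolic} and rests on the Li--Yau gradient estimate; once that is granted, the rest is a careful exercise in Gaussian integration and Bishop--Gromov volume comparison.
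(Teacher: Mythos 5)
The paper does not prove this proposition; it simply states it as a citation of Li--Yau \cite{li1986parabolic}, so there is no internal proof to compare against. Your sketch correctly reproduces the standard Li--Yau argument for that theorem — representing $G(x,y)=\int_0^\infty H(x,y,t)\,dt$, invoking the two-sided Gaussian heat kernel bounds under $\mathrm{Ric}_g\ge 0$, splitting at $t_0=d_g(x,y)^2$ with the change of variables $s=\sqrt{t}$, and absorbing the short-time piece via Bishop--Gromov volume doubling — so it is both correct and essentially the same route as the cited source.
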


\subsection{Warped $\mu$-bubbles}
Let $(X, g)$ be an oriented connected Riemannian manifold together with a decomposition 
  $\partial X = \partial_{-} X \sqcup \partial_{+} X,$
  where $\partial_{\pm} X$ are (non-empty) unions of boundary components.
  Fix a smooth function $u > 0$ on $X$ and a smooth function $h$ on the interior $\mathring{X}.$
Choose a Caccioppoli set $\Omega_{0}$ with smooth boundary, which contains an open neighborhood of $\partial_{-} X$ and is disjoint from $\partial_{+} X.$
Consider the following functional
\[
\mathcal{A}_{u, h} (\Omega) = \int_{\partial^{*} \Omega} u\, d \mathcal{H}^{n-1} - \int_{X} (\chi_{\Omega} - \chi_{\Omega_{0}}) h u\, d\mathcal{H}^{n}
\]
for all Caccioppoli sets $\Omega$ with $\Omega \Delta \Omega_{0} \Subset \mathring{X}.$
Here, $\mathcal{H}^{k}$ denotes the $k$-Hausdorff measure with respect to the distance $d_{g}$ induced from $g.$
$\mathcal{C}(X)$ denotes the set of all Caccioppoli sets $\Omega$ such that $\Omega \Delta \Omega_{0} \Subset \mathring{X}.$
If $\Omega \in \mathcal{C}(X)$, $\Omega$ contains an open neighborhood of $\partial_{-} X$ and is disjoint from $\partial_{+} X.$
A Caccioppoli set minimizing $\mathcal{A}_{u, h}$ in this class $\mathcal{C}(X)$ is called a \textit{warped} $\mathit{\mu}$-\textit{bubble}.
The existence and regularity of a minimizer of $\mathcal{A}_{u, h}$ was given in {\cite[Proposition 12]{chodosh2024generalized}} (see also {\cite[Proposition 2.1]{zhu2021width}}) and {\cite[Theorem 2.2]{zhou2020existence}}.
\begin{prop}[\cite{bellettini2019stable, aiex2026quantitative} (see also {\cite[Proposition 12]{chodosh2024generalized}} and {\cite[Theorem 2.2]{zhou2020existence}})]
\label{prop-existence}
  Let $n \ge 2.$ Suppose that $h(x) \rightarrow \pm \infty$ as $x \rightarrow \partial_{\mp} X.$
  Then there is a warped $\mu$-bubble $\Omega$ such that $\Omega \Delta \Omega_{0} \Subset \mathring{X}.$
  Moreover, the regular part $\partial^{\mathrm{reg}} \Omega$ of $\partial \Omega$ is a smooth hypersurface and the singular part $\partial^{\mathrm{sing}} \Omega$ of $\partial \Omega$ has Hausdorff dimension at most $n-8.$
\end{prop}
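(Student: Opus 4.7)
The plan is to prove Proposition \ref{prop-existence} by the direct method in the calculus of variations, with the key input being a barrier argument that exploits the divergence of $h$ at the two boundary components. More specifically, I would first show that any $\mathcal{A}_{u,h}$-minimizing sequence $\{\Omega_j\} \subset \mathcal{C}(X)$ stays trapped inside a fixed compact subset of $\mathring{X}$, then extract a subsequential BV limit, and finally invoke classical geometric measure theory for the regularity of the minimizer.

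For the barrier step, the idea is to compare $\Omega_0$ with modifications near $\partial_\pm X$. Fix a small tubular collar of $\partial_- X$ and, for parameters $t > 0$, consider the competitor $\Omega_{0,t}$ obtained by removing the $t$-neighborhood of $\partial_- X$ from $\Omega_0$. The first variation identity
\[
\frac{d}{dt}\mathcal{A}_{u,h}(\Omega_{0,t})\Big|_{t=0^{+}} = \int_{\Sigma_t} \bigl(H_{\Sigma_t} + u^{-1}\langle \nabla u, \nu\rangle - h\bigr)\, u\, d\mathcal{H}^{n-1}
\]
(suitably interpreted) together with $h(x) \to +\infty$ as $x \to \partial_- X$ shows that pushing a Caccioppoli set toward $\partial_- X$ strictly increases $\mathcal{A}_{u,h}$ once one is sufficiently close to $\partial_- X$. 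Symmetrically, $h \to -\infty$ at $\partial_+ X$ forbids approaching $\partial_+ X$. Hence there exist inner and outer foliated barriers $\Omega_-, \Omega_+$ with $\Omega_- \subset \Omega \subset \Omega_+$ for every sufficiently good competitor $\Omega$, and these barriers have compact closure in $\mathring{X}$.

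With the minimizing sequence confined to a fixed compact set $K \subset \mathring{X}$, the weighted perimeters $\int_{\partial^* \Omega_j} u\, d\mathcal{H}^{n-1}$ are uniformly bounded (using $\inf_K u > 0$), so BV compactness yields an $L^1$-convergent subsequence $\Omega_j \to \Omega$. Lower semicontinuity of the weighted perimeter (since $u$ is continuous and positive) and dominated convergence of the bulk term $\int (\chi_{\Omega_j}-\chi_{\Omega_0}) hu\, d\mathcal{H}^n$ (here one uses that the sequence is trapped away from the singular locus of $h$) show that $\Omega$ is a minimizer. Regularity of $\partial\Omega$ is then standard: $\partial\Omega$ solves a prescribed mean curvature equation of the form
\[
H_{\partial\Omega} = h - u^{-1}\langle \nabla u, \nu\rangle
\]
in the variational sense, so by De Giorgi--Federer--Almgren regularity theory for almost minimizers the regular part is a smooth hypersurface and the singular part has Hausdorff dimension at most $n-8$.

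The main obstacle I expect is the barrier step. One must verify that the divergence of $h$ at $\partial_\pm X$ genuinely overwhelms the mean curvature and warping contributions in the first variation, and this requires some care because $\partial_\pm X$ may have nontrivial geometry and $u$ is only assumed smooth and positive; one typically constructs an explicit foliation near each boundary component and checks the sign of the modified mean curvature $H + u^{-1}\partial_\nu u - h$ along it. The convergence of the bulk integral near where $h$ blows up is the other delicate point, but it is handled automatically once the barrier traps everything in a region where $h$ is bounded.
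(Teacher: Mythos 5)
Your proposal reconstructs the proof that the paper does not actually give: Proposition \ref{prop-existence} is cited verbatim from Chodosh--Li and Zhou--Zhu, and the only argumentative content the paper itself adds is a short remark explaining why the $n-8$ bound on $\partial^{\mathrm{sing}}\Omega$, which \cite{zhou2020existence} states only for $u\equiv 1$, persists for a general smooth positive weight $u$. The paper handles this by the $S^{1}$-invariance device of \cite{chodosh2024generalized}: pass to the equivalent $S^{1}$-invariant variational problem on $(X\times S^{1},\ g+u^{2}dt^{2})$, where $u$ disappears, so that a singular set of dimension $>n-8$ for $\Omega$ would force a singular set of dimension $>(n+1)-8$ for $\Omega\times S^{1}$, contradicting the unweighted regularity theory in dimension $n+1$. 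Your outline --- trap minimizing sequences inside a compact subset of $\mathring{X}$ using the divergence of $h$ at $\partial_{\pm}X$, extract a BV limit, use lower semicontinuity of the weighted perimeter and continuity of the bulk term, and then invoke prescribed-mean-curvature regularity --- is exactly the argument in those references, and it is correct.

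Two points worth tightening. First, the regularity step is precisely where the weight $u\not\equiv 1$ requires a word: you appeal to ``De Giorgi--Federer--Almgren regularity for almost minimizers,'' which does apply, but one should say why --- either rewrite $\int_{\partial^{*}\Omega}u\,d\mathcal{H}^{n-1}$ as the ordinary perimeter in the conformal metric $u^{2/(n-1)}g$ (so the $\mu$-bubble is a $\Lambda$-minimizer of unweighted perimeter and the $n-8$ bound follows from the classical theory), or use the paper's $S^{1}$-product trick. As stated, your sketch silently assumes what the paper's remark is there to justify. Second, a small sign error in the barrier step: $\Omega_{0,t}$, obtained by deleting a $t$-neighborhood of $\partial_{-}X$ from $\Omega_{0}$, does not lie in $\mathcal{C}(X)$, since admissible sets must contain a neighborhood of $\partial_{-}X$. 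The actual comparison goes the other way: given an admissible $\Omega$ whose boundary approaches $\partial_{-}X$, replace it by $\Omega\cup\{\mathrm{dist}(\cdot,\partial_{-}X)<t\}$; the bulk term drops by an amount proportional to $\sup h$ on the collar, which dominates the bounded increase in weighted perimeter once $h$ is large enough, so the modified competitor strictly improves $\mathcal{A}_{u,h}$ and the minimizing sequence may be assumed to avoid a fixed collar of $\partial_{-}X$ (symmetrically for $\partial_{+}X$). With these two corrections, your argument is a faithful reconstruction of the proof in the cited sources, which is what the paper relies on.
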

\begin{rema}
  The statement about regularity stated in \cite{bellettini2019stable, aiex2026quantitative} was only for the case $u \equiv 1$.
  However, the corresponding statement for general smooth function $u > 0$ can also be obtained as follows.
  As mentioned in {\cite[Remark 11]{chodosh2024generalized}}, the variational problem for $\mathcal{A}_{u, h}$ inside $X$ is equivalent to the corresponding functional $\tilde{\mathcal{A}}_{\tilde{h}}$ among the $S^{1}$-invarinat Caccioppoli sets inside $(X \times S^{1}, g + u^{2} dt^{2})$.
  Suppose that the Hausdorff dimension of the singular set $\partial^{\mathrm{sing}} \Omega$ of a minimizer $\Omega$ is greater than $n-8$.
  Then $\partial^{\mathrm{sing}} \Omega \times S^{1}$ is contained in the singular set of the minimizer $\Omega \times S^{1}$ of $\tilde{\mathcal{A}}_{\tilde{h}}$, and its Hausdorff dimension is greater than $(n-8) + 1 = n-7$.
  On the other hand, from {\cite[Theorem 1.1]{bellettini2019stable}}, the Hausdorff dimension of $\partial^{\mathrm{sing}} (\Omega \times S^{1})$ is at most $(n+1)-8 = n-7$.
  This is a contradiction.
\end{rema}
The first and second variation formula are given in {\cite[Lemma 13]{chodosh2024generalized}} and {\cite[Lemma 14]{chodosh2024generalized}} respectively (see also {\cite[4.1, 4.3]{rade2023scalar}}). We use the the second variation formula in the form of {\cite[Theorem 4.3]{chodosh2401stable}}.
\begin{prop}[First variation {\cite[Lemma 4.10]{rade2023scalar}}]
\label{prop-first}
Suppose $\Omega \in \mathcal{C}(X)$ is a smooth and let $\Sigma$ be a connected component of $\partial \Omega \setminus \partial_{-} X.$
  For any smooth function $\phi$ on $\Sigma$ let $V_{\phi}$ be a vector field on $X,$ which vanishes outside a small neighborhood of $\Sigma$ and agree with $\phi \nu$ on $\Sigma.$ Here, $\nu$ is the outwards pointing unit normal of $\Sigma.$ Let $\Phi_{t}$ be the flow generated by $V_{\phi}$ with $\Phi_{0} = \mathrm{id}.$
  Then
  \[
  \left. \frac{d}{dt} \right|_{t = 0} \mathcal{A}_{u, h} (\Phi_{t} (\Omega)) = \int_{\Sigma} (H u + g(\nabla^{X} u, \nu) - hu) \phi\, d\mathcal{H}^{n-1}.
  \]
  Here, $\nabla^{X} u \in \mathfrak{X}(X)$ denotes the gradient vector field of $u$ with respect to $g$ and $H$ is the scalar mean curvature of $\Sigma.$
  In particular, a smooth $\mu$-bubble $\Omega$ satisfies 
  \begin{equation}
  \label{eq-3}
    H = -u^{-1} g (\nabla^{X} u, \nu) + h
  \end{equation}
  along $\partial \Omega.$
\end{prop}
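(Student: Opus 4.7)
The plan is to split $\mathcal{A}_{u,h}$ into its weighted area piece and its bulk piece, differentiate each at $t=0$, and then add. First I would handle the weighted area $\int_{\partial^{*}\Omega} u\, d\mathcal{H}^{n-1}$ by invoking the classical first variation of area with a smooth positive density. Since $V_{\phi}$ restricts to $\phi\nu$ on $\Sigma$,
\[
\left.\frac{d}{dt}\right|_{t=0} \int_{\Phi_{t}(\Sigma)} u\, d\mathcal{H}^{n-1} = \int_{\Sigma} \bigl( V_{\phi}(u) + u\, \mathrm{div}_{\Sigma} V_{\phi} \bigr) d\mathcal{H}^{n-1},
\]
where the tangential divergence of $\phi\nu$ along $\Sigma$ equals $H\phi$ (this is exactly the content of the unweighted first variation), while $V_{\phi}(u)|_{\Sigma} = \phi\, g(\nabla^{X} u, \nu)$. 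Collecting gives a contribution of $\int_{\Sigma} (Hu + g(\nabla^{X} u, \nu))\phi\, d\mathcal{H}^{n-1}$.

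Next I would differentiate the bulk term $-\int_{X} (\chi_{\Omega} - \chi_{\Omega_{0}}) hu\, d\mathcal{H}^{n}$. Only $\chi_{\Omega}$ carries the $t$-dependence, and the region $\Phi_{t}(\Omega)$ sweeps out $\Sigma$ at $t=0$ with normal speed $\phi$. The standard formula for differentiating the (weighted) measure of a smoothly moving domain, obtained either from the coarea formula applied to the flow $\Phi_{t}$ or from a direct change-of-variables computation using $\partial_{t} \chi_{\Phi_{t}(\Omega)}$ as a distributional derivative along $\Sigma$, yields $-\int_{\Sigma} hu\phi\, d\mathcal{H}^{n-1}$. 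Summing this with the previous contribution produces the stated first variation formula.

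For the Euler--Lagrange equation \eqref{eq-3}, if $\Omega$ is a smooth $\mu$-bubble then $\frac{d}{dt}\big|_{t=0} \mathcal{A}_{u,h}(\Phi_{t}(\Omega)) = 0$ for every smooth test function $\phi$ on $\Sigma$ (compactly supported away from $\partial_{\pm} X$, which is automatic since $\Sigma \subset \mathring{X}$). By the fundamental lemma of the calculus of variations, the integrand $(Hu + g(\nabla^{X} u, \nu) - hu)$ must vanish pointwise along $\Sigma$, and dividing by the positive function $u$ gives exactly \eqref{eq-3}.

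I do not expect serious obstacles: the only nontrivial technical point is justifying the interchange of $\frac{d}{dt}$ with the two integrals, but since $\Omega$ is smooth and $u, h, g$ are smooth with $V_{\phi}$ compactly supported in a tubular neighborhood of $\Sigma$, both interchanges follow from dominated convergence. The computation is essentially the density-weighted version of the standard first variation of area combined with the transport formula for the volume of a moving domain.
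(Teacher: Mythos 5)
Your proposal is correct and is essentially the standard argument that the cited reference ([Lemma 4.10] of R\"ade's paper) uses; the paper itself does not reprove this proposition but simply cites it. Your decomposition of $\mathcal{A}_{u,h}$ into the weighted area piece (handled by the density-weighted first variation of area, using $\mathrm{div}_{\Sigma}(\phi\nu)=H\phi$ and $V_{\phi}(u)|_{\Sigma}=\phi\, g(\nabla^{X}u,\nu)$) plus the bulk piece (handled by the transport formula for a moving domain), followed by the fundamental lemma of the calculus of variations and division by the positive weight $u$, is exactly the expected route, and the technical justification via compact support of $V_{\phi}$ and smoothness of $u,h,g$ is adequate.
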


\begin{prop}[Second variation {\cite[In the proof of Theorem 4.3]{chodosh2401stable}}]
\label{prop-second}
  Suppose $\Omega \in \mathcal{C}(X)$ is a smooth $\mu$-bubble and let $\Sigma$ be a connected component of $\partial \Omega \setminus \partial_{-} X.$
  For any smooth function $\phi \in C_{0}^{\infty}(\Sigma)$ on $\Sigma$ let $V_{\phi}$ be a vector field on $X,$ which vanishes outside a small neighborhood of $\Sigma$ and agree with $\phi \nu$ on $\Sigma.$ Here, $\nu$ is the outwards pointing unit normal of $\Sigma.$ Let $\Phi_{t}$ be the flow generated by $V_{\phi}$ with $\Phi_{0} = \mathrm{id}.$
  Then
  \begin{equation}
  \label{eq-4}
  \begin{split}
    \left. \frac{d^{2}}{dt^{2}} \right|_{t = 0} \mathcal{A}_{u, h} (\Phi_{t} (\Omega)) &= \int_{\Sigma} \phi^{2} (\Delta^{X} u - \Delta^{\Sigma} u) - 2\phi^{2} u^{-1} g(\nabla^{X} u, \nu)^{2}\, d \mathcal{H}^{n-1} \\
    &+ \int_{\Sigma} u (|\nabla^{\Sigma} \phi|_{g}^{2} - (|A_{\Sigma}|^{2} + \mathrm{Ric}_{g}(\nu, \nu)) \phi^{2})\, d \mathcal{H}^{n-1} \\
    &+ \int_{\Sigma} \phi^{2} g(\nabla^{X} u, \nu) h - \phi^{2} u g (\nabla^{X} h, \nu)\, d \mathcal{H}^{n-1}.
  \end{split}
  \end{equation}
  Here, $\Delta^{X} u, \nabla^{\Sigma} \phi$ and $A_{\Sigma}$ denote respectively the Laplacian of $u$ with respect to $g,$ the gradient of $\phi$ with respect to the induced metric $g|_{\Sigma}$ and the second fundamental form of $\Sigma$ with respect to $g.$ 
\end{prop}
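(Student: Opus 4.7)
The plan is to differentiate the first variation formula from Proposition \ref{prop-first} once more in $t$ and exploit the Euler--Lagrange equation \eqref{eq-3} to collapse the result. Writing $\Sigma_{t} := \Phi_{t}(\Sigma)$ and $F_{t} := H_{\Sigma_{t}} u + g(\nabla^{X} u, \nu_{\Sigma_{t}}) - h u$, the first variation pulled back to $\Sigma$ is $\int_{\Sigma} F_{t} \cdot \langle V, \nu_{\Sigma_{t}} \rangle \cdot J_{t}\, d \mathcal{H}^{n-1}$, with $J_{t}$ the Jacobian of $\Phi_{t}|_{\Sigma}$. Because $\Omega$ is a $\mu$-bubble, $F_{0} \equiv 0$ on $\Sigma$ by \eqref{eq-3}, so only the derivative of the factor $F_{t}$ contributes at $t=0$, and
\[
\left. \frac{d^{2}}{dt^{2}} \right|_{t=0} \mathcal{A}_{u, h} (\Phi_{t}(\Omega)) = \int_{\Sigma} \phi \cdot \partial_{t} F_{t}\big|_{t=0}\, d \mathcal{H}^{n-1}.
\]

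I would next compute $\partial_{t} F_{t}|_{t=0}$ term by term using the standard first-order evolution rules for a normal variation $V = \phi \nu$: namely, $\partial_{t} H = -\Delta^{\Sigma} \phi - (|A_{\Sigma}|^{2} + \mathrm{Ric}_{g}(\nu, \nu)) \phi$, $\partial_{t} \nu_{\Sigma_{t}} = -\nabla^{\Sigma} \phi$, $\partial_{t} u = \phi\, g(\nabla^{X} u, \nu)$, $\partial_{t} h = \phi\, g(\nabla^{X} h, \nu)$, and (extending $\nu$ to a neighborhood of $\Sigma$ so that $\nabla_{\nu} \nu = 0$) $\partial_{t} g(\nabla^{X} u, \nu_{\Sigma_{t}}) = \phi\, \mathrm{Hess}_{g}(u)(\nu, \nu) - g(\nabla^{\Sigma} u, \nabla^{\Sigma} \phi)$. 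Integrating by parts on $\Sigma$ folds the $-u \phi \Delta^{\Sigma} \phi$ and $-\phi\, g(\nabla^{\Sigma} u, \nabla^{\Sigma} \phi)$ pieces into the single Dirichlet term $u |\nabla^{\Sigma} \phi|_{g}^{2}$. The normal Hessian is then replaced by the tangential/ambient splitting $\mathrm{Hess}_{g}(u)(\nu, \nu) = \Delta^{X} u - \Delta^{\Sigma} u - H\, g(\nabla^{X} u, \nu)$, and \eqref{eq-3} is applied to the resulting $H\, g(\nabla^{X} u, \nu)$ term as well as to the $H \phi^{2}\, g(\nabla^{X} u, \nu)$ factor produced directly by $\partial_{t}(Hu)$, in order to express every explicit occurrence of $H$ on $\Sigma$ in terms of $u$, $h$, and $g(\nabla^{X} u, \nu)$.

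The main obstacle is bookkeeping: tracking signs through $\partial_{t} \nu_{\Sigma_{t}} = -\nabla^{\Sigma} \phi$ and through the integrations by parts on $\Sigma$, and then applying \eqref{eq-3} in a coordinated way so that the quadratic contributions in $u^{-1} g(\nabla^{X} u, \nu)^{2}$ and the $h$-coupled contributions assemble into the exact form displayed in \eqref{eq-4}. No new idea beyond these standard hypersurface computations is required; equivalently, one may cross-check the outcome via the well-known identification of warped $\mu$-bubbles with $S^{1}$-invariant minimal hypersurfaces in $(X \times S^{1}, g + u^{2}\, dt^{2})$, using the Ricci and $|A|^{2}$ formulas for warped products.
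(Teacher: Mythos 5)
The paper does not actually prove Proposition~\ref{prop-second} --- it imports it directly from \cite[proof of Theorem 4.3]{chodosh2401stable} --- so there is no in-paper argument to compare against. Your outline (differentiate the first variation once more, use that the Euler--Lagrange integrand $F_0 = Hu + g(\nabla^X u,\nu) - hu$ vanishes on $\Sigma$ so that only $\partial_t F_t|_{t=0}$ survives, then use the standard evolution rules, integrate by parts, and split the normal Hessian) is the correct and standard route. However, there is a genuine gap in the final step, and it is precisely in the place you flag as ``bookkeeping.''

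Carrying your plan through, the two explicit $H$-terms cancel exactly: $\partial_t(H_t u)$ contributes $+\phi^2 H\, g(\nabla^X u,\nu)$, while the splitting $\mathrm{Hess}_g u(\nu,\nu) = \Delta^X u - \Delta^\Sigma u - H\, g(\nabla^X u,\nu)$ contributes $-\phi^2 H\, g(\nabla^X u,\nu)$. Since these two occurrences of $H$ appear with opposite signs, applying \eqref{eq-3} to them can only produce $0$; there is no way for them to ``assemble'' into any net multiple of $u^{-1}g(\nabla^X u,\nu)^2$ or $h\,g(\nabla^X u,\nu)$. What your method actually yields after the integration by parts is
\[
\int_\Sigma \phi^2(\Delta^X u - \Delta^\Sigma u) + u\bigl(|\nabla^\Sigma\phi|^2 - (|A_\Sigma|^2 + \mathrm{Ric}_g(\nu,\nu))\phi^2\bigr) - \phi^2 g(\nabla^X u,\nu)h - \phi^2 u\, g(\nabla^X h,\nu)\, d\mathcal{H}^{n-1},
\]
which differs from the right-hand side of \eqref{eq-4} by $2\int_\Sigma \phi^2 H\, g(\nabla^X u,\nu)$, a term that does not vanish for a generic $\mu$-bubble. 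You can see the discrepancy concretely on $\Sigma = \{r=1\}\subset\mathbb{R}^2$ with $u(r)=r$, $h\equiv 2$, $\phi\equiv 1$ (so that \eqref{eq-3} holds): the actual second variation is $\tfrac{d^2}{dt^2}\bigl[2\pi(1+t)^2 - \tfrac{4\pi}{3}((1+t)^3-1)\bigr]\big|_{t=0} = -4\pi$, which agrees with the formula above, whereas the right-hand side of \eqref{eq-4} evaluates to $0$. (Note also that the ``alternative form'' of the second variation quoted in the Appendix of this paper, modulo an obvious $\psi^2\leftrightarrow\psi$ typo in the $g(\nabla_\Sigma u,\nabla_\Sigma\psi)$ term, agrees with the displayed formula and not with \eqref{eq-4}.) So either \eqref{eq-4} as typeset carries a sign/coefficient slip in its last two blocks, or there is an additional manipulation specific to the CLMS argument that your proposal does not supply; in either case, your claim that the bookkeeping closes into the exact stated form is not justified as written and needs to be revisited before this can count as a proof.
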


\section{Proofs of Propositions \ref{main-1}, \ref{main-2} and Theorem \ref{main-3}}
\label{section-proof}
In this section we prove Propositions \ref{main-1}, \ref{main-2} and Theorem \ref{main-3}.

We consider the smooth connected compact Riemannian manifold $(X, g|_{X})$ for some $X \subset M,$ and warped $\mu$-bubbles on it.
Using Proposition \ref{prop-existence} we can find a warped $\mu$-bubble $\Omega$ minimizing
\[
\mathcal{A}_{u, h} (\Omega) = \int_{\partial^{*} \Omega} u\, d \mathcal{H}^{n-1} - \int_{X} (\chi_{\Omega} - \chi_{\Omega_{0}}) h u\, d\mathcal{H}^{n}
\]
for all Caccioppoli sets $\Omega$ with $\Omega \Delta \Omega_{0} \Subset \mathring{X}$ for some reference Caccioppoli set $\Omega_{0}.$
And set $\Sigma$ be a component of $\partial \Omega$ contained in $\mathring{X}.$
Then, from Proposition \ref{prop-existence}, $\Sigma$ is compact and smooth.
Moreover, since $\Omega$ is a minimizer of $\mathcal{A}_{u, h},$ its second derivative at $\Omega$ (i.e., the right hand side of (\ref{eq-4})) is nonnegative. 

\smallskip
Now we prepare an auxiliary lemma.
\begin{lemm}[{\cite[p. 13]{chodosh2401stable}}]
  \begin{equation}
\label{eq-5}
\begin{split}
\int_{\Sigma} u |\nabla^{\Sigma} \phi|^{2} - \phi^{2} \Delta^{\Sigma} u
&\le \frac{4}{3} \int_{\Sigma} |\nabla^{\Sigma} (u^{1/2} \phi)|_{g}^{2} \\
&= \frac{1}{3} \int_{\Sigma} u^{-1} \phi^{2} |\nabla^{\Sigma} u|_{g}^{2} + \frac{4}{3} \int_{\Sigma} \phi g(\nabla^{\Sigma} u, \nabla^{\Sigma} \phi) \\
&+ \frac{4}{3} \int_{\Sigma} u |\nabla^{\Sigma} \phi|_{g}^{2}
\end{split}
\end{equation}
for all $\phi \in C^{0, 1}_{0} (\Sigma).$
\end{lemm}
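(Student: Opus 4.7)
The plan is to derive both the equality and the inequality by elementary calculus: the equality is a direct product-rule computation, while the inequality reduces to a completion-of-squares argument after one integration by parts.

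For the equality, I would compute
\[
\nabla_{\Sigma}(u^{1/2}\phi) = \tfrac{1}{2} u^{-1/2}\phi\, \nabla_{\Sigma} u + u^{1/2}\, \nabla_{\Sigma}\phi,
\]
square its $g$-length to obtain
\[
|\nabla_{\Sigma}(u^{1/2}\phi)|_{g}^{2} = \tfrac{1}{4} u^{-1}\phi^{2} |\nabla_{\Sigma} u|_{g}^{2} + \phi\, g(\nabla_{\Sigma} u, \nabla_{\Sigma}\phi) + u|\nabla_{\Sigma}\phi|_{g}^{2},
\]
and multiply through by $\tfrac{4}{3}$, which recovers the second displayed line of (\ref{eq-5}).

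For the inequality, I would first apply Green's identity to rewrite the Laplacian term. Since $\phi \in C_{0}^{0,1}(\Sigma)$ has compact support in the smooth locus $\partial^{\mathrm{reg}}\Omega$ (cf.\ Proposition \ref{prop-existence}), there are no boundary contributions, so
\[
-\int_{\Sigma}\phi^{2}\Delta^{\Sigma} u\, d\mathcal{H}^{n-1} = \int_{\Sigma} g(\nabla_{\Sigma}(\phi^{2}),\nabla_{\Sigma} u)\, d\mathcal{H}^{n-1} = 2\int_{\Sigma}\phi\, g(\nabla_{\Sigma} u,\nabla_{\Sigma}\phi)\, d\mathcal{H}^{n-1}.
\]
Subtracting the resulting form of the left-hand side of (\ref{eq-5}) from the expanded right-hand side reduces the claim to the pointwise nonnegativity of
\[
\tfrac{1}{3} u^{-1}\phi^{2}|\nabla_{\Sigma} u|_{g}^{2} - \tfrac{2}{3}\phi\, g(\nabla_{\Sigma} u,\nabla_{\Sigma}\phi) + \tfrac{1}{3} u|\nabla_{\Sigma}\phi|_{g}^{2},
\]
which equals $\tfrac{1}{3} u^{-1}\bigl|\phi\,\nabla_{\Sigma} u - u\,\nabla_{\Sigma}\phi\bigr|_{g}^{2} \ge 0$.

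I do not anticipate any substantive obstacle; the estimate is a Cauchy--Schwarz statement in disguise. The only noteworthy feature is that the constant $\tfrac{4}{3}$ is distinguished: the quadratic form in $(\phi, \nabla_{\Sigma}\phi)$ that results from the subtraction has vanishing discriminant exactly at this coefficient, which is why the remainder collapses into a perfect square rather than merely a nonnegative sum. Any coefficient strictly smaller than $\tfrac{4}{3}$ would leave an indefinite quadratic form, so the stated constant is in fact sharp for a pointwise inequality of this shape.
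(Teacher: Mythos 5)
Your proof is correct. The paper does not prove this lemma at all---it is cited verbatim from Chodosh--Li--Minter--Stryker (p.~13)---so there is no in-paper argument to compare against; your derivation (expand $\nabla_\Sigma(u^{1/2}\phi)$ by the product rule, integrate by parts to turn $-\int_\Sigma\phi^2\Delta^\Sigma u$ into $2\int_\Sigma\phi\,g(\nabla_\Sigma u,\nabla_\Sigma\phi)$, then recognize the difference of integrands as $\tfrac13 u^{-1}\lvert\phi\nabla_\Sigma u - u\nabla_\Sigma\phi\rvert_g^2\ge 0$) is the natural elementary one and matches the source. Your sharpness remark is also correct: for a coefficient $c$ in place of $\tfrac43$, the resulting quadratic form has Gram matrix with diagonal $c/4,\ c-1$ and off-diagonal $(c-2)/2$, and positive semidefiniteness forces $c(c-1)\ge(c-2)^2$, i.e.\ $c\ge\tfrac43$.
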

Take $u := e^{\psi f}$ where $\psi$ is a smooth function on $X$ and $f$ is the potential function of the Ricci soliton. 
Then one can easily compute as follows.
 \[
  \begin{split}
   \nabla^{X} u &= u \nabla^{X} (\psi f) = u \psi \nabla^{X} f + u f \nabla^{X} \psi, \\
    \Delta^{X} u &= u \psi \Delta^{X} f + 2u g(\nabla^{X} \psi, \nabla^{X} f) + uf \Delta^{X} \psi + u \psi^{2} |\nabla^{X} f|^{2}_{g} \\
    &~~~+ u f^{2} |\nabla^{X} \psi|_{g}^{2} + 2 u \psi f g(\nabla^{X} \psi, \nabla^{X} f) \\
    &= -u \psi R_{g} + 2u g(\nabla^{X} \psi, \nabla^{X} f) + uf \Delta^{X} \psi + u \psi^{2} |\nabla^{X} f|_{g}^{2} \\
    &~~~+ u f^{2} |\nabla^{X} \psi|_{g}^{2} + 2 u \psi f g(\nabla^{X} \psi, \nabla^{X} f).
  \end{split}
  \]
  We have used the traced soliton equation:$\, R_{g} + \Delta^{X} f = 0$ in the last equality.
  Moreover, from the first variation formula (\ref{eq-3}), we have
  \[
  |A_{\Sigma}|^{2} \ge \frac{1}{n-1} H^{2} = \frac{1}{n-1} |h - u^{-1} g(\nabla^{X} u, \nu)|^{2}.
  \]
  Putting these together into (\ref{eq-4}) and using (\ref{eq-5}), we obtain that
  \begin{equation}\label{eq-2nd-explicit}
  \begin{split}
  0 &\le \int_{\Sigma} -\phi^{2} u \psi R_{g} + \phi^{2} u \psi^{2} |\nabla^{X} f|_{g}^{2} + \phi^{2} u f^{2} |\nabla^{X} \psi|_{g}^{2} + 2 \phi^{2} u \psi f g(\nabla^{X} \psi, \nabla^{X} f) \\
  &+ \int_{\Sigma} -2 \phi^{2} u \psi^{2} g(\nabla^{X} f, \nu)^{2} - 2 \phi^{2} u f^{2} g(\nabla^{X} \psi, \nu)^{2} \\
  &+ \int_{\Sigma} -4 \phi^{2} u \psi f g(\nabla^{X} f, \nu) \cdot g(\nabla^{X} \psi, \nu) + \int_{\Sigma} \phi^{2} u f \Delta^{X} \psi + 2 \phi^{2} u g(\nabla^{X} \psi, \nabla^{X} f) \\
  &- \int_{\Sigma} \frac{\phi^{2} u}{n-1} |h - \psi g(\nabla^{X} f, \nu) - f g(\nabla^{X} \psi, \nu)|^{2} - \mathrm{Ric}_{g}(\nu, \nu) \phi^{2} u \\
  &+ \int_{\Sigma} \phi^{2} u \psi g(\nabla^{X} f, \nu) h + \phi^{2} u f g(\nabla^{X} \psi, \nu) h - \int_{\Sigma} \phi^{2} u g(\nabla^{X} h, \nu) \\
  &+ \frac{1}{3} \int_{\Sigma} \phi^{2} u \psi^{2} |\nabla^{\Sigma} f|_{g}^{2} + \phi^{2} u f^{2} |\nabla^{\Sigma} \psi|_{g}^{2} + \frac{2}{3} \int_{\Sigma} \phi^{2} u \psi f g(\nabla^{\Sigma} \psi, \nabla^{\Sigma} f) \\
  &+ \frac{4}{3} \int_{\Sigma} \phi u \psi g(\nabla^{\Sigma} f, \nabla^{\Sigma} \phi) + \phi u f g(\nabla^{\Sigma} \psi, \nabla^{\Sigma} \phi) + \frac{4}{3} \int_{\Sigma} u |\nabla^{\Sigma} \phi|_{g}^{2}.
  \end{split}
  \end{equation}
  
  Based on these preparations, we first prove Proposition \ref{main-1} and \ref{main-2} in the following two subsections.
\subsection{Proof of Proposition \ref{main-1}}
\label{subsection-1}
We give a proof of Proposition \ref{main-1}.
We prove this separately for the case where $n \le 7$ and for the general case.
\begin{proof}[Proof of Proposition \ref{main-1}]
  
\underline{\textbf{Step 1: smooth case} $\mathbf{(n \le 7)}$}

\smallskip
Fix a point $p \in M.$ Let $0 < L < L'$, and $\rho_{0}$ be a smoothing of the distance function $d_{g}(p, \cdot)|_{\overline{B_{g}(p, L')} \setminus B(p, L)}$ from a fixed point $p \in M$ such that $\rho_{0}(x) = d(p, x)$ for all $x \in \partial \left( \overline{B_{g}(p, L')} \setminus B(p, L) \right)$ and $|\nabla \rho_{0}|_{g} \le 2.$ 
From the Sard's theorem, one can assume that $\partial \left( \overline{B_{g}(p, L')} \setminus B(p, L) \right)$ is smooth compact $(n-1)$-dimensional submanifold of $M.$
Let $X$ be one of the connected components of $\overline{\{ x \in M~|~\rho(x) \le L' \}} \setminus \{ x \in M~|~\rho(x) < L \}.$
Set $\partial_{-} X := \partial \overline{\{ x \in M~|~\rho(x) \le L' \}}$.
Take some reference Caccioppoli set $\Omega_{0}$ with $\partial_{-} X \Subset \Omega_{0} \Subset X \setminus \partial_{+} X,$ and $\psi = \phi \equiv 1$ in the above calculation.
Then, we obtain from (\ref{eq-2nd-explicit}) that
\begin{equation}\label{eq-prop2.1-stability}
\begin{split}
0 &\le -\int_{\Sigma} u R_{g}  + C L^{-\alpha} u + \frac{4}{3} \int_{\Sigma} u |\nabla^{X} f|_{g}^{2} - \frac{n+1}{n-1} u g(\nabla^{X} f, \nu)^{2} \\
&~~~-\frac{1}{n-1} \int_{\Sigma} u \left( h^{2} - (n+1) h g(\nabla^{X} f, \nu) \right) + \int_{\Sigma} u |\nabla^{X} h|_{g}. 
\end{split}
\end{equation}
We have also used the assumption (\ref{eq-ric1}) here.
Using Young's inequality,
\[
(n+1) g(\nabla^{X} f, \nu) h \le \frac{n+1}{2\varepsilon} g(\nabla^{X} f, \nu)^{2} + \frac{\varepsilon (n+1)}{2} h^{2}.
\]
Taking $\varepsilon = \frac{2}{n+1} \delta~(\delta \in (0, 1))$ here, we get
\[
(n+1) g(\nabla^{X} f, \nu) h \le \left( \frac{2}{n+1} \right)^{2} \delta^{-1} g(\nabla^{X} f, \nu)^{2} + \delta h^{2},
\]
and the terms containing $g(\nabla^{X} f, \nu)^{2}$ in the integrand can be estimated as
\[
\begin{split}
\left( \frac{1}{n-1} \left( \frac{n+1}{2} \right)^{2} \delta^{-1} - \frac{n+1}{n-1} \right)\, g(\nabla^{X} f, \nu)^{2} 
&= \frac{(n+1 - 4\delta)(n+1)}{4\delta (n-1)}\, g(\nabla^{X} f, \nu)^{2} \\
&\le \frac{(n+1 - 4\delta)(n+1)}{4\delta (n-1)}\, |\nabla^{X}f|_{g}^{2}.
\end{split}
\]
Putting them together into the above stability inequality (\ref{eq-prop2.1-stability}), we obtain that
\begin{equation}\label{eq-prop2.1-stability2}
\begin{split}
0 \le -\int_{\Sigma}  u R_{g} &+ C L^{-\alpha} u + \left( \frac{4}{3} + \frac{(n+1 - 4\delta)(n+1)}{4\delta (n-1)} \right)\, \int_{\Sigma} u |\nabla^{X} f|_{g}^{2} \\
&-\frac{1-\delta}{n-1} \int_{\Sigma} u h^{2} + \int_{\Sigma} u |\nabla^{X} h|_{g}.
\end{split}
\end{equation}
If $\inf_{M} R(g) = 0,$ then the desired assertion is trivial, hence we can assume that $\inf_{M} R(g) > 0.$
In order to obtain a contradiction, we suppose that
\begin{equation}
\label{eq-6}
-\left( \frac{7}{3} + \frac{(n+1 - 4\delta)(n+1)}{4\delta (n-1)} \right)\, \inf_{M} R(g)
+ \left( \frac{4}{3} + \frac{(n+1 - 4\delta)(n+1)}{4\delta (n-1)} \right)\, C_{0} < 0. 
\end{equation}
Let $L_{0}$ be an arbitrary positive real constant. 
Since the diameter of $(M, g)$ is infinite, we can take $L'$ so that $L_{0} < L' - L$ and neither of the sets $\partial_{-} X, \partial_{+} X$ are non-empty.
Let $\rho : X \rightarrow \left( -\frac{L' - L}{2}, \frac{L' - L}{2} \right)$ be a smoothing of the signed distance function of the set $\{ x \in M~|~d_{g}(\partial_{-} X, x) = d_{g}(\partial_{+} X, x) \}$ with $|\mathrm{Lip}\, \rho| \le 2$ and $\rho \equiv \pm \frac{L' - L}{2}$ on $\partial_{\pm} X.$
Hence, in particular,
\[
\rho \rightarrow
\begin{cases}
-\frac{L' - L}{2} &\mathrm{at}~\partial_{-} X, \\
\frac{L' - L}{2} &\mathrm{at}~\partial_{+} X.
\end{cases}
\]
Take a smooth function $h$ defined as 
\[
h(x) := - \frac{2 \pi A}{L' - L} \tan \left( \frac{\pi}{L' -L} \rho(x) \right),
\]
where $A$ is a real positive constant, which is taken as $A = \frac{n-1}{1-\delta}$.
Then it satisfies
\begin{equation}\label{eq-prop2.1-h}
-\frac{1}{A} h^{2}(x) + |\nabla^{X} h|(x) \le 4A \left( \frac{\pi}{L' - L} \right)^{2}.
\end{equation}
Thus, from (\ref{eq-prop2.1-stability2}) and (\ref{eq-prop2.1-h}) with taking $L$ and $L_{0}$ large enough, we finally obtain that 
\[
0 \le \left. \frac{d^{2}}{dt^{2}} \right|_{t = 0} \mathcal{A}_{u, h} (\Phi_{t} (\Omega)) < 0.
\]
This is impossible.
Therefore our supposition (\ref{eq-6}) was not correct and hence
\[
-\left( \frac{7}{3} + \frac{(n+1 - 4\delta)(n+1)}{4\delta (n-1)} \right)\, \inf_{M} R(g)
+ \left( \frac{4}{3} + \frac{(n+1 - 4\delta)(n+1)}{4\delta (n-1)} \right)\, C_{0} \ge 0.
\]
Sorting this out, we finally obtain that
\[
\frac{\inf_{M} R(g)}{C_{0}} \le A(\delta, n)~~\mathrm{for~all}~\delta \in (0, 1),
\]
where 
\[
A(\delta, n) := \frac{\frac{4}{3} + \frac{(n+1-4\delta)(n+1)}{4\delta(n-1)}}{\frac{7}{3} + \frac{(n+1-4\delta)(n+1)}{4\delta(n-1)}}~~~(< 1).
\]
And it easily turns out that 
\begin{itemize}
\item $A(\delta, n)$ is non-increasing with respect to $\delta,$ and
\item $A(1, n)$ is increasing with respect to $n.$
\end{itemize}
Hence
$\frac{\inf_{M} R(g)}{C_{0}} \le A(1, n) = A(n)$ and $A(\delta, n) > A(1, n) \ge A(1, 2) = \frac{7}{19}.$ 

\underline{\textbf{Step 2: general case} $\mathbf{(n \ge 8)}$}

\smallskip
We prove the general case following the argument in {\cite[Section 4]{bray2019proof}} (cf. {\cite[Appendix]{antonelli2024new}}, {\cite[Proof of Theorem 2.2]{zhou2020existence}}).
Let $\mathcal{S}$ be the singular set of $\Sigma$, then it is compact (since $X$ is compact and $\Omega_0 \Subset X$).
From Proposition \ref{prop-existence}, $\mathcal{S}$ has Hausdorff dimension at most $n-8$. In particular, $\mathcal{H}^{n-7}(\mathcal{S}) = 0$.
Hence, for any $\delta > 0$, there exist a finite collection of balls $\{ B_{r_{i}}(x_{i}) \}$ such that $r_{i} \le \delta$, $\mathcal{S} \subset \cup_{i} B_{r_{i}}(x_{i}) =: S_{\delta}$, and $\sum_{i} r_{i}^{n-7} \le 1$.
For each $i$, one can find a smooth function $\eta_{i} : M \rightarrow [0,1]$ such that
\[
\eta_{i} |_{B_{r_{i}}}(x_{i}) \equiv 0,~~\eta_{i}|_{M \setminus B_{2r_{i}}(x_{i})} \equiv 1,~~|\nabla^{M} \eta_{i}| \le 2 r_{i}^{-1}.
\]
Setting $\tilde{\eta}(x) := \min_{i} \{ \eta_{i}(x) \}$ and regularizing this, we can obtain a smooth function $\eta : M \rightarrow [0,1]$ such that 
\[
\eta|_{\cup_{i} B_{r_{i}(x_{i})}} \equiv 0,~~\eta|_{\cup_{i}B_{M \setminus B_{2r_{i}}(x_{i})}} \equiv 1,~~|\nabla^{M} \eta| \le 2 |\nabla^{M} \tilde{\eta}|,~~\mathcal{H}^{n}-a.e.
\]
Hence, we have
\begin{equation}\label{eq-gradient}
  |\nabla^{M} \eta| \le \sum_{i} 2 |\nabla^{M} \eta_{i}| \le 2 \sum_{i} r_{i}^{-1},~~\mathcal{H}^{n}-a.e.
\end{equation}
Put $S'_{\delta} := \cup_{i} B_{2r_{i}}(x_{i})$ and let $W_{\delta} := (S'_{\delta} \setminus S_{\delta}) \cap \Sigma$.
Since $\Sigma$ is contained in the interior of $X$ and the function $h$ is bounded there, from the first variation formula (\ref{eq-3}), the generalized mean curvature $H$ of $\Sigma$ is bounded.
Therefore, according to the monotonicity formula for varifolds ({\cite[Theorem 17.6]{simon1983lectures}}), we have, for all $i$,
\begin{equation}\label{eq-monotonicity}
  \mathcal{H}^{n-1}(B_{2r_{i}}(x_{i}) \cap \Sigma) \le C r_{i}^{n-1}
\end{equation}
for some positive constant $C$ depending only on $L, L'$ and $g$.
Since $W_{\delta} \subset S'_{\delta} \cap \Sigma$, from (\ref{eq-monotonicity}) we have
\begin{equation}\label{eq-monotonicity2}
\mathcal{H}^{n-1}(W_{\delta}) \le \mathcal{H}^{n-1} (S'_{\delta} \cap \Sigma)
\le \sum_{i} \mathcal{H}^{n-1} (B_{2r_{i}}(x_{i}) \cap \Sigma)
\le C \sum_{i} r_{i}^{n-1}
\le C \delta^{6}.
\end{equation}

By replacing $\phi$ with $\eta \phi$ in Step 1 above, from (\ref{eq-2nd-explicit}), we have
\[
\begin{split}
  0 \le -\int_{U}  u R_{g} &+ C L^{-\alpha} u + \left( \frac{4}{3} + \frac{(n+1 - 4\delta)(n+1)}{4\delta (n-1)} \right)\, \int_{U} u |\nabla^{X} f|_{g}^{2} \\
&-\frac{1-\delta}{n-1} \int_{U} u h^{2} + \int_{U} u |\nabla^{X} h|_{g} \\
&+ C_{1} \mathcal{H}^{n-1}(W_{\delta}) + \frac{4}{3} \int_{W_{\delta}} \eta \phi^{2} u\, g(\nabla^{\Sigma} \eta, \nabla^{\Sigma} f) + \frac{4}{3} \int_{W_{\delta}} u |\nabla^{\Sigma} \eta|_{g}^{2}.
\end{split}
\]
Here, the constant $C_{1}$ depends only on $L, L', g$ and $f$ and $U := (\Sigma \setminus \mathcal{S}) \setminus S'_{\delta}$.
Then, using (\ref{eq-gradient}) and (\ref{eq-monotonicity2}) to estimate the last three terms in the right-hand side, we have
\[
\begin{split}
  0 &\le -\int_{U}  u R_{g} + C L^{-\alpha} u + \left( \frac{4}{3} + \frac{(n+1 - 4\delta)(n+1)}{4\delta (n-1)} \right)\, \int_{U} u |\nabla^{X} f|_{g}^{2} \\
&~~~~~~-\frac{1-\delta}{n-1} \int_{U} u h^{2} + \int_{U} u |\nabla^{X} h|_{g} \\
&~~~~~~~+ C_{1} \delta^{6} + C_{2} r_{i}^{-1}\, \mathcal{H}^{n-1}(W_{\delta}) + C_{3} r_{i}^{-2}\, \mathcal{H}^{n-1}(W_{\delta}) \\
&\le -\int_{U}  u R_{g} + C L^{-\alpha} u + \left( \frac{4}{3} + \frac{(n+1 - 4\delta)(n+1)}{4\delta (n-1)} \right)\, \int_{U} u |\nabla^{X} f|_{g}^{2} \\
&~~~~~~-\frac{1-\delta}{n-1} \int_{U} u h^{2} + \int_{U} u |\nabla^{X} h|_{g} 
+ C_{1} \delta^{6} + CC_{2} \delta^{5} + CC_{3} \delta^{4}.
\end{split}
\]
Therefore, by taking $\delta > 0$ small enough, one can argue in the same way as in Step 1 and get the same conclusion in dimensions $\ge 8$. 
\end{proof}

\subsection{Proof of Proposition \ref{main-2}}
\label{subsection-2}
We next prove Proposition \ref{main-2}.
\begin{proof}[Proof of Proposition \ref{main-2}]
The general case can be proved in the same way as Step 2 in the proof of Proposition \ref{main-1}, so below we prove it for the case where $\Sigma$ is smooth.

Take $\phi \equiv 1$ and $\psi (\cdot) := G(p, \cdot),$ where $G$ is the minimal positive Green's function with the pole at $p.$
Take $X$ as a component of the set $\{ x \in M~|~a \le \psi (x) \le 2a \}~(0 < a \le 1).$
By Sard's theorem and our assumption: $G(x) \rightarrow 0$ as $d_{g}(p, x) \rightarrow \infty$, we can take $a$ so that $X$ is smooth compact connected manifold with boundary.
From the assumption, we can assume that $X \subset M \setminus \overline{B_{g}(p, L)}$ for sufficiently large $L > 0,$ which is determined later.
Take some reference Caccioppoli set $\Omega_{0}$ with $\partial_{-} X \Subset \Omega_{0} \Subset X \setminus \partial_{+} X.$
From the identity (\ref{eq-1}), $|\nabla^{X} f| \le C_{0}^{1/2}$ and
\begin{equation}
\label{eq-7}
  |f(x)| \le C_{0}^{1/2} d_{g}(p,x) + |f(p)|.
\end{equation}
Using these, Proposition \ref{prop-harmonic} and (\ref{eq-ric3}), we obtain from (\ref{eq-2nd-explicit}) that
\[
\begin{split}
0 \le \int_{\Sigma} u \psi &\left( -R_{g} + C (\psi + |f|^{2} \psi L^{-2} + \psi |f| L^{-1} + L^{-1} + L^{-2} \psi \cdot \psi^{-1} ) \right) \\
&+ B(h, |\nabla h|_{g}) 
\end{split}
\]
for some positive constant $C > 0.$
Note here that we have used the assumption (\ref{eq-ric3}) to estimate the $\mathrm{Ric}_{g}(\nu, \nu)$-term. 
Here $B(h, |\nabla h|_{g})$ is the terms containing $h$ or $|\nabla h|$ and this is explicitly expressed as 
\[
B(h, |\nabla h|_{g}) = - \frac{1}{n-1} u h^{2} + \frac{n+1}{n-1} \left( u \psi g(\nabla^{X} f, \nu) h + u f g(\nabla^{X} \psi, \nu) h \right) + u |\nabla^{X} h|.
\]
Suppose that $\inf_{M} R_{g} > 0.$
From the assumption $G(x) \rightarrow 0$ as $d_{g}(p,x) \rightarrow \infty$, we can take $L$ large enough so that $C \psi < \frac{1}{4} \inf_{M} R_{g}$ on $\Sigma.$
Thus, using Young's inequality, we can estimate the term $B(h, |\nabla h|_{g})$ as
\begin{equation}\label{eq-prop2.2-B}
B(h, |\nabla h|_{g}) \le \left( -A \psi^{-1} h^{2} + \psi^{-1} |\nabla h|_{g} + C_{1} \psi g(\nabla^{X} f, \nu)^{2} + C_{2} f^{2} g(\nabla^{X} \psi, \nu)^{2} \right) u \psi
\end{equation}
for some positive constants $A, C_{1}$ and $C_{2}.$
The third and fourth term of the right hand side of (\ref{eq-prop2.2-B}) are estimated by using Proposition \ref{prop-harmonic} and (\ref{eq-7}) so that these are $< \frac{1}{4} \inf_{M} R_{g} u \psi$ on $\Sigma$ by taking $L$ large enough.
Now take a function $h$ defined as 
\[
h(x) := \frac{A^{-1}}{1 - a^{-1} \psi(x)} - \frac{A^{-1} }{1 - (2a)^{-1} \psi(x)}.
\]
Then, we have
\[
\begin{split}
  |\nabla^{X} h|_{g} &\le \frac{(a^{-1} - (2a)^{-1}) |\nabla^{X} \psi| \psi^{2} + (a^{-1} - (2a)^{-1}) |\nabla^{X} \psi| (a^{-1} \psi^{2} + (2a)^{-1} \psi^{2})}{A (1 - a^{-1} \psi)^{2}(1 - (2a)^{-1} \psi)^{2}} \\
  &\le \frac{(2a)^{-1}\frac{C}{L}\psi^{2}}{A(1 - a^{-1} \psi)^{2}(1 - (2a)^{-1} \psi)^{2}}.
\end{split}
\]
for some positive constant $C = C(n) > 0.$
Here, we have used Proposition \ref{prop-harmonic} and $0 < a \le 1$ in the last inequality.
Thus, taking $L$ large enough so that $C/L \le 1,$ we finally obtain that
\[
|\nabla^{X} h|_{g} \le \frac{(2a)^{-1}\psi^{2}}{A(1 - a^{-1} \psi)^{2}(1 - (2a)^{-1} \psi)^{2}}.
\]
On the other hand, since $a \le 1,$
\[
-A h^{2} = - \frac{(2a)^{-2} \psi^{2}}{A(1 - a^{-1} \psi)^{2}(1 - (2a)^{-1} \psi)^{2}}
\le - \frac{(2a)^{-1} \psi^{2}}{A(1 - a^{-1} \psi)^{2}(1 - (2a)^{-1} \psi)^{2}}.
\]
Therefore $-A \psi^{-1} h^{2} + \psi^{-1} |\nabla^{X} h|_{g} \le 0.$
As a result, we come up with the conclusion that
\[
0 \le \left. \frac{d^{2}}{dt^{2}} \right|_{t = 0} \mathcal{A}_{u, h} (\Phi_{t} (\Omega)) < 0,
\]
and conclude the proof by contradiction.
\end{proof}

\begin{rema}\label{rema-2.2}
  From the proof of Proposition \ref{main-2} above, in Proposition \ref{main-2}, the assumption (\ref{eq-ric3}) can be replaced with 
  \[
  \mathrm{Ric}_{g}(x) \ge -C d_{g}^{-2}(p, x)~~\mathrm{and}~\lim_{d_{g}(p, x) \rightarrow +\infty} d_{g}^{2}(p, x) \cdot G(x) = +\infty.
  \]
\end{rema}
\subsection{Proof of Theorem \ref{main-3}}
\label{subsection-3}
Finally, we give a proof of Theorem \ref{main-3}.

\begin{proof}[Proof of Theorem \ref{main-3}]
The general case can be proved in the same way as Step 2 in the proof of Proposition \ref{main-1}, so below we prove it for the case where $\Sigma$ is smooth.

Take $\Omega_{0}, \phi$ and $\psi$ in the same way as in the proof of Proposition \ref{main-2}.
Then we similarly obtain from (\ref{eq-2nd-explicit}) that
\[
\begin{split}
  0 &\le \int_{\Sigma} u \psi d_{g}(p, x)^{-\alpha} \left( -R_{g} d^{\alpha}_{g}(p, x) + 2 \psi^{-1} d^{\alpha}_{g}(p, x) \cdot g(\nabla^{X} \psi, \nabla^{X} f) \right) \\
  &~~~+ \int_{\Sigma} u \psi d_{g}(p, x)^{-\alpha} d^{\alpha}_{g}(p, x) (\mathrm{other~terms}).
\end{split}
\]
Here, by taking $L$ large enough and using (\ref{eq-2}) and (\ref{eq-ric2}), $d^{\alpha}_{g}(p, x) (\mathrm{other~terms})$ can be arbitrary small as in the proof of Proposition \ref{main-2}.
From Proposition \ref{prop-harmonic} and (\ref{eq-1}), the second term can be estimated as 
\[
2 \psi^{-1} d^{\alpha}_{g}(p, x) \cdot g(\nabla^{X} \psi, \nabla^{X} f) 
\begin{cases}
  \le C C_{0}^{1/2} & \mathrm{if}~\alpha = 1, \\
  \le C L^{\alpha - 1} & \mathrm{if}~0< \alpha < 1
\end{cases}  
\]
for some positive constant $0 < C = C(n).$
Therefore, if it was true that 
\[
\liminf_{x \rightarrow \infty} R_{g} d^{\alpha}_{g}(p, x) >
\begin{cases}
C C_{0}^{1/2} &\mathrm{if}~\alpha = 1, \\
0 &\mathrm{if}~0< \alpha < 1
\end{cases}
\]
respectively, 
then we could obtain in both cases that
\[
0 \le \left. \frac{d^{2}}{dt^{2}} \right|_{t = 0} \mathcal{A}_{u, h} (\Phi_{t} (\Omega)) < 0.
\]
This is impossible. 
\end{proof}

\begin{rema}\label{rema-main-3}
  From the proof above, the assumption (\ref{eq-ric2}) can be replaced with 
  \[
  \mathrm{Ric}_{g}(x) \ge -C d^{-2}_{g}(p, x)~~\mathrm{and}~\lim_{d_{g}(p, x) \rightarrow +\infty} d^{2-\alpha}_{g}(p, x) \cdot G(x) = +\infty.
  \]
\end{rema}

\section{Appendix}
\label{section-appendix}
In \cite{bray2019proof}, Bray--Gui--Liu--Zhang found a new proof of Bishop's volume comparison theorem. They used ``singular soap bubble'', and the key idea was investigating the second derivative of the isoperimetric profile function.
Here, we give a proof of another theorem involving Ricci lower bound: Bonnet--Myers theorem, using warped $\mu$-bubbles.
\begin{prop}
Let $n \ge 2$ and $(M^{n}, g, f)$ be a tuple that consists of an $n$-dimensional complete Riemannian manifold $(M, g)$ and a function $f$ on $M$ satisfying
  \begin{equation}
  \label{eq-BE}
    \mathrm{Ric}_{g} + \mathrm{Hess}_{g} f \ge \frac{\lambda}{2} g
  \end{equation}
  for some positive constant $\lambda.$
  Assume that $0 < \sup_{M} |\nabla f|_{g} < \infty.$
  Then $M$ is compact and
  \begin{equation}
  \label{eq-9}
    \mathrm{diam} (M, g) \le \pi \sqrt{\frac{2(n-1)}{\lambda B}} \cdot \frac{B (1+B)^{1/4}}{\left( (2+B) \sqrt{1+B} -2(B+1) \right)^{1/2}},
  \end{equation}
  where
  \[
  B := \frac{(n-1)\lambda}{2 \sup_{M} |\nabla f|_{g}^{2}}.
  \]
\end{prop}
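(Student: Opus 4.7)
The plan is to run a warped $\mu$-bubble contradiction argument paralleling Step 1 of the proof of Proposition~\ref{main-1}, but feeding in the Bakry--Émery bound (\ref{eq-BE}) instead of the steady soliton identity, and letting $\sup_M|\nabla f|_g$ play the role of the bounded parameter that limits the length of a stable band. Arguing by contradiction, suppose $\mathrm{diam}(M,g)>D$ with $D$ the right-hand side of (\ref{eq-9}), pick $p_\pm\in M$ with $d_g(p_-,p_+)>D$, and, after smoothing the distance function from $p_-$ as in the proof of Proposition~\ref{main-1}, extract a connected compact region $X\subset M$ whose boundary splits as $\partial_-X\sqcup\partial_+X$ with $W:=\mathrm{dist}(\partial_-X,\partial_+X)>D$. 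Fix a reference Caccioppoli set $\Omega_0$ with $\partial_-X\Subset\Omega_0\Subset X\setminus\partial_+X$, and for weight and prescribed mean curvature take
\[
u:=e^{\psi f},\qquad h(x):=-\frac{2\pi B}{W}\tan\!\Bigl(\frac{\pi}{W}\rho(x)\Bigr),
\]
with $\psi,B>0$ constants to be optimized and $\rho$ a signed-distance smoothing of the midset of $X$. Proposition~\ref{prop-existence} then produces a minimizing warped $\mu$-bubble $\Omega$; the $n\ge 8$ case is reduced to the smooth case by the cutoff trick from Step~2 of Proposition~\ref{main-1}.

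Next I would insert this $u$ into the second variation formula (Proposition~\ref{prop-second}) with test function $\phi\equiv 1$. Using $\nabla^X u=u\psi\nabla^X f$ and $\Delta^X u=u\psi\Delta^X f+u\psi^2|\nabla^X f|_g^2$, the Bakry--Émery inequality (\ref{eq-BE}) in the form $\mathrm{Ric}_g(\nu,\nu)\ge \tfrac{\lambda}{2}-\mathrm{Hess}_g f(\nu,\nu)$, the first variation identity $H=-\psi\, g(\nabla^X f,\nu)+h$ from Proposition~\ref{prop-first} together with $|A_\Sigma|^2\ge H^2/(n-1)$, and the auxiliary inequality (\ref{eq-5}) to absorb the $\Delta^\Sigma u$-term, the stability inequality reduces to
\[
0\le \int_\Sigma u\Bigl[-\tfrac{\lambda}{2}+P(\psi)|\nabla^X f|_g^2+Q(\psi)\,h\,g(\nabla^X f,\nu)-\tfrac{1}{n-1}h^2+|\nabla^X h|_g\Bigr]\,d\mathcal{H}^{n-1},
\]
with $P,Q$ explicit polynomials in $\psi$ arising from the Bochner-type cancellations. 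A single Young's inequality with parameter $\delta\in(0,1)$ absorbs the cross term, the bound $|\nabla^X f|_g^2\le\sup_M|\nabla f|_g^2$ is then applied pointwise, and choosing $B=\tfrac{n-1}{1-\delta}$ so that the analogue of (\ref{eq-prop2.1-h}) gives $-\tfrac{1-\delta}{n-1}h^2+|\nabla^X h|_g\le \tfrac{4(n-1)\pi^2}{(1-\delta)W^2}$, the inequality collapses after dividing by $\int_\Sigma u$ to the scalar bound
\[
\tfrac{\lambda}{2}\le \kappa(\psi,\delta)\sup_M|\nabla f|_g^2+\tfrac{4(n-1)\pi^2}{(1-\delta)W^2}.
\]

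Jointly optimizing the function $\kappa(\psi,\delta)$ -- the quantity $A=(n-1)\lambda/(2\sup_M|\nabla f|_g^2)$ appearing in the statement is precisely the natural ratio between the two right-hand terms at the optimum -- yields the sharp bound $W\le D$, contradicting $W>D$; compactness of $M$ is then automatic because a complete non-compact Riemannian manifold has infinite diameter. The main obstacle I anticipate is this final algebraic optimization: the coefficient $P(\psi)$ couples the $\psi^2|\nabla f|^2$ contribution coming from $\Delta^X u$ with the $\mathrm{Hess}_g f(\nu,\nu)$ contribution traded out of $\mathrm{Ric}_g(\nu,\nu)$, and $\psi$ and $\delta$ must be tuned so that the extremum of $\kappa$ matches exactly the explicit nested-radical constant $A(1+A)^{1/4}/\bigl((2+A)\sqrt{1+A}-2(A+1)\bigr)^{1/2}$ of (\ref{eq-9}). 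The remaining ingredients -- existence of the $\mu$-bubble, the first/second variation, the auxiliary inequality (\ref{eq-5}), and the cutoff reduction to $n\le 7$ -- are essentially routine once the weight $u=e^{\psi f}$ has been chosen.
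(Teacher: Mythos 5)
There is a genuine gap centered on the choice of weight. You propose $u=e^{\psi f}$ with $\psi>0$ a free constant to be optimized, and claim that after inserting this into the second variation formula and trading $\mathrm{Ric}_g(\nu,\nu)\ge\tfrac{\lambda}{2}-\mathrm{Hess}_g f(\nu,\nu)$ the stability inequality reduces to a controlled form. It does not: the Hessian of the weight contributes $\mathrm{Hess}_g u(\nu,\nu)=u\bigl(\psi\,\mathrm{Hess}_g f(\nu,\nu)+\psi^2 g(\nabla f,\nu)^2\bigr)$, while the Bakry--\'Emery trade introduces $+u\,\mathrm{Hess}_g f(\nu,\nu)$, so what survives is a term $(1+\psi)\,u\,\mathrm{Hess}_g f(\nu,\nu)$. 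The hypothesis gives no bound whatsoever on $\mathrm{Hess}_g f$ (only on $|\nabla f|_g$), so for any $\psi\ne -1$ this term is uncontrolled and the argument stalls. The cancellation is exact precisely when $\psi=-1$, i.e.\ $u=e^{-f}$; this is not a tunable parameter but a forced choice, and in particular taking $\psi>0$ is the wrong sign. This is exactly the choice the paper makes, and the paper further uses the alternative second-variation form displaying $\mathrm{Hess}_g u(\nu,\nu)$ directly, which makes the cancellation immediate.

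Once $u=e^{-f}$ is fixed, the remaining argument is a one-parameter Young's-inequality optimization (over $\delta$, equivalently $\varepsilon_\delta$), not a two-parameter optimization over $(\psi,\delta)$ as you envisage. The nested-radical constant in (\ref{eq-9}) arises from minimizing $F(\delta)=\sqrt{(1+A\delta)/(\delta(1-\delta))}$ over $\delta\in(0,1)$ after the choice $\tfrac{1}{n-1}\tfrac{\varepsilon_\delta}{1-\varepsilon_\delta}\sup_M|\nabla f|_g^2=\tfrac{\lambda}{2}\delta$, not from a joint extremum in which $A$ is ``the natural ratio between two terms.'' A smaller point: with $\phi\equiv 1$ (constant test function) the term $\int_\Sigma\phi^2\Delta^\Sigma u$ vanishes by Stokes, so inequality (\ref{eq-5}) is not actually needed here. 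The overall strategy (band region, warped $\mu$-bubble, $\tan$-type prescribed mean curvature, contradiction) is right, but the analysis must be rebuilt around the forced weight $u=e^{-f}$.
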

\begin{rema}
  The right hand side of the estimate diverges to $+\infty$ as $B \rightarrow 0$ and converges to $\pi \sqrt{\frac{2(n-1)}{\lambda}}$ as $B \rightarrow +\infty$ respectively.
  In particular, the case of $B = +\infty$ (which corresponds to the case of $f \equiv \mathrm{const}$) recovers Myers' theorem \cite{myers1941riemannian}.
\end{rema}
\begin{proof}
The general case can be proved in the same way as Step 2 in the proof of Proposition \ref{main-1}, so below we prove it for the case where $\Sigma$ is smooth.

  Firstly, note that the 2nd variation of $\mathcal{A}$ can also be written as the following alternative form (see {\cite[Proof of Theorem 4.3]{chodosh2401stable}}):
\[
\begin{split}
0 \le \left. \frac{d^{2}}{dt^{2}} \mathcal{A}(\Omega_{t}) \right|_{t = 0} &= \int_{\Sigma} \psi^{2} \mathrm{Hess}_{g} u (\nu, \nu) - \psi^{2} g(\nabla^{\Sigma} u, \nabla^{\Sigma} \psi) \\
&+ \int_{\Sigma} \psi^{2} g(\nabla^{M} u, \nu) H \\
&-\int_{\Sigma} u \left( \psi \Delta^{\Sigma} \psi + (|A^{\Sigma}|^{2} + \mathrm{Ric}(\nu, \nu))\psi^{2} \right) \\
&- \int_{\Sigma} \psi^{2} g(\nabla^{M} u, \nu)h + \psi^{2} u g(\nabla^{M} h, \nu).
\end{split}
\] 
We prove the proposition by contradiction.
Suppose that
\[
\mathrm{diam}(M, g) > (1 + \tilde{\delta}) \sqrt{\frac{2(n-1)(1+a)^{2} \pi^{2}}{\lambda \varepsilon_{\delta}(1-\delta)}} =: L
\]
for fixed arbitrary constants $\tilde{\delta}, a > 0$
Here, we set
\[
(0,1) \ni \varepsilon_{\delta} := \left( 1 + \frac{(n-1)\lambda \delta}{2 \sup_{M} |\nabla f|_{g}^{2}} \right)^{-1} \frac{(n-1)\lambda \delta}{2 \sup_{M} |\nabla f|_{g}^{2}}~~(\delta \in (0, 1)).
\]
Then there exist two points $p, q \in M$ such that $d_{g}(p, q) = L.$
Consider the component of the set $\overline{B(q, L - \varepsilon_0)} \setminus B(q, \varepsilon_0)$ containing the minimizing geodesic connecting $p$ and $q$, for sufficiently small $\varepsilon_0 > 0$ with $\partial_{-}X = \partial B(p, L - \varepsilon_0) \cap \overline{X}$ and $\partial_{+} X = \partial B(q, \varepsilon_0)$. (More precisely, in order to construct such an $X,$ we need to take a smoothing of the distance function and perturb the boundaries by using Sard's theorem (see the proof of Proposition \ref{main-1}). But, since we can perform such smoothing with arbitrary small error, we can still discuss without loss of generality.)
By Proposition \ref{prop-existence}, we can find a warped $\mu$-bubble $\Omega$ of the functional $\mathcal{A}_{u, h}$ with the choice $u = e^{-f}$ and $\psi = \mathrm{const}$ and some reference Caccioppoli set $\Omega_{0}.$
Set $\Sigma := \partial \Omega \setminus \partial_{-} X.$
Then, by Proposition \ref{prop-second}, (\ref{eq-4}) and our assumption (\ref{eq-BE}), we obtain that
\[
\begin{split}
0 &\le \int_{\Sigma} u \left[ \psi^{2}|\nabla^{M} h| - \frac{\psi^{2}}{n-1} |A^{\Sigma}|^{2} - \frac{\lambda}{2} \psi^{2} \right] \\  
&\le \int_{\Sigma} u \left[ \psi^{2}|\nabla^{M} h| - \frac{\psi^{2}}{n-1} \left( h^{2} + 2g(\nabla^{M} f, \nu) h + g(\nabla^{M} f, \nu)^{2} \right) - \frac{\lambda}{2} \psi^{2} \right].
\end{split}
\]
Using Young's inequality, we obtain that
\[
2g(\nabla^{M} f, \nu) h \le \frac{1}{1-\varepsilon} g(\nabla^{M} f, \nu)^{2} + (1- \varepsilon) h^{2}
\]
for all $\varepsilon \in (0, 1).$
So, 
\[
0 \le \int_{\Sigma} u \left[ \psi^{2} \left( |\nabla^{M} h| - \frac{\varepsilon}{n-1} h^{2} \right) + \frac{\psi^{2}}{n-1} \cdot \frac{\varepsilon}{1-\varepsilon} g(\nabla^{M} f, \nu)^{2} - \frac{\lambda}{2} \psi^{2} \right].
\]
Thus we take $h$ as
  \[
  h(x) := - \frac{(n-1)(1 + a) \pi}{\varepsilon L} \tan \left( \frac{\pi}{L} \rho(x) \right),
  \]
  where $\rho(x)$ is a smooth function that satisfies $\rho \rightarrow -L/2$ at $\partial_{-} X$ and $\rho \rightarrow L/2$ at $\partial_{+} X$ with $|\mathrm{Lip}\, \rho| \le 1 + a.$
  Such a function can be constructed via smoothing the distance function from the equidistant set from $p$ and $q$ with an appropriate modification.
  Then it holds that
\[
 |\nabla^{M} h| - \frac{\varepsilon}{n-1} h^{2} \le \frac{(n-1)(1+a)^{2} \pi^{2}}{\varepsilon L^{2}}.
\]
Then, we get
\[
0 \le \int_{\Sigma} u \psi^{2} \left[ \frac{(n-1)(1+a)^{2} \pi^{2}}{\varepsilon L^{2}} + \frac{1}{n-1} \cdot \frac{\varepsilon}{1-\varepsilon} \sup_{M} |\nabla^{M} f|^{2} - \frac{\lambda}{2} \right].
\]
From the definition of $\varepsilon_{\delta}$, we have
\[
\frac{1}{n-1} \cdot \frac{\varepsilon_{\delta}}{1-\varepsilon_{\delta}}\, \sup_{M} |\nabla^{M} f|^{2} = \frac{\lambda}{2} \delta.
\]
Then it satisfies that
\[
\frac{1}{n-1} \cdot \frac{\varepsilon_{\delta}}{1-\varepsilon_{\delta}}\, \sup_{M} |\nabla^{M} f|^{2} - \frac{\lambda}{2} = -\frac{\lambda}{2} (1-\delta).
\]
Summing up with this and the definition of $L,$ we get a contradiction:
\[
0 \le \int_{\Sigma} u \psi^{2} \left[ \frac{(n-1)(1+a)^{2} \pi^{2}}{\varepsilon_{\delta}} \cdot \frac{1}{L^{2}} + \frac{1}{n-1} \cdot \frac{\varepsilon_{\delta}}{1-\varepsilon_{\delta}} \sup_{M} |\nabla^{M} f|^{2} - \frac{\lambda}{2} \right] < 0.
\]
Hence we obtain that
\[
\mathrm{diam}(M, g) \le (1 + \tilde{\delta}) \sqrt{\frac{2(n-1)(1+a)^{2} \pi^{2}}{\lambda \varepsilon_{\delta}(1-\delta)}}
\]
for all $\tilde{\delta} > 0$ and $a > 0$ by contradiction.
Therefore we finally reach the conclusion that
\[
\mathrm{diam}(M, g) \le \sqrt{\frac{2(n-1)\pi^{2}}{\lambda \varepsilon_{\delta}(1-\delta)}}~~~\mathrm{for~all}~\delta \in (0, 1).
\]
In particular, $M$ is compact.

The right hand side of the previous estimate can be written as 
\[
\pi \sqrt{\frac{2(n-1)}{\lambda B}} \sqrt{\frac{1 + B \delta}{\delta (1 - \delta)}} =: \pi \sqrt{\frac{2(n-1)}{\lambda B}} \cdot F(\delta).
\]
An easy calculation implies that
$F(\delta)$ attains its minimum 
\[
\frac{B(1+B)^{1/4}}{\left( (2+B)\sqrt{1+B} -2(B+1) \right)^{1/2}}
\]
at $\delta = -B^{-1} + B^{-1} \sqrt{1+B}~(\in (0, 1)).$
Hence we obtain the desired estimate.
\end{proof}
\begin{rema}
\label{rema-diam}
This type of estimate is also given by \cite{limoncu2010modifications, tadano2018upper, wu2018myers}.
Among these, {\cite[Theorem 1.3]{wu2018myers}} is the sharpest estimate of this form.
We note here that our estimate (\ref{eq-9}) is not better than that of {\cite[Theorem 1.3]{wu2018myers}}.
Moreover Limoncu \cite{limoncu2010modifications} and Tadano \cite{tadano2018upper} have proven this type of estimate under more general setting (i.e., not only for the case of $(M^{n}, g)$ is a steady gradient Ricci soliton but also for the case of $(M^{n}, g, X)$ is a tuple that consists of a Riemannian $n$-manifold $(M^{n}, g)$ and a vector field $X$ on $M$ satisfying $\mathrm{Ric}_{g} + \frac{1}{2} \mathcal{L}_{X} g \ge \frac{\lambda}{2} g$ for some positive constant $\lambda$ with $\sup_{M} |X|_{g} < \infty$).
\end{rema}
\begin{rema}
  In \cite{antonelli2024new}, Antonelli--Xu gave Bishop--Gromov volume comparison and Bonnet--Myers theorem under a spectral assumption.
  It may be worth asking whether the method used in \cite{antonelli2024new} (``unequally warped $\mu$-bubbles'') can be used to prove Bonnet--Myers theorem when (\ref{eq-BE}) is satisfied in a spectral sense.
\end{rema}

\bigskip
\noindent
\textit{E-mail adress}:~hamanaka1311558@gmail.com

\smallskip
\noindent
\textsc{Department of Mathematics, Graduate School of Science, Osaka University, 1-1 Machikaneyama-cho, Toyonaka, Osaka 560-0043, Japan}

\end{document}